\numberwithin{equation}{section}
\newtheorem{theorem}{\bf Theorem}[section]
\newtheorem{corollary}[theorem]{\bf Corollary}
\newtheorem{proposition}[theorem]{\bf Proposition}
\newtheorem{lemma}[theorem]{\bf Lemma}
\newtheorem{definition-theorem}[theorem]{\bf Theorem-Definition}
\newtheorem{remark}[theorem]{\bf Remark}
\numberwithin{equation}{section}
\newcommand{\bZ}{\mathbb{Z}}
\newcommand{\bR}{\mathbb{R}}
\newcommand{\bC}{\mathbb{C}}
\newcommand{\g}{\mathfrak{g}}
\newcommand{\mft}{\mathfrak{t}}
\newcommand{\h}{\mathfrak{h}}
\newcommand{\af}{{\rm aff}}
\newcommand{\eve}{{\rm ev}}
\newcommand{\cU}{\mathcal{U}}
\newcommand{\ve}{\varepsilon}
\newcommand{\n}{\mathfrak{n}}
\newcommand{\mfa}{\mathfrak{a}}
\newcommand{\mfu}{\mathfrak{u}}
\newcommand{\mfb}{\mathfrak{b}}
\address{Department of Mathematics and Statistics\\ University of Regina\\ Regina, Saskatchewan \\Canada S4S 0A2}
\email[]{mareal@math.uregina.ca}
\begin{document}
\title[The periodic  quantum Toda lattice]{On the complete integrability of the  periodic  quantum  Toda lattice}

\author{Augustin-Liviu Mare}

\subjclass[2010]{Primary 37K10; Secondary 17B35, 37N20}
\keywords{periodic quantum Toda lattice; complete integrability; Kac-Moody Lie algebras;
foldings of Dynkin diagrams}

\begin{abstract} We prove that the periodic quantum Toda lattice corresponding to any extended Dynkin
diagram is completely integrable. This has been conjectured and  { proved in all classical cases and $E_6$} by Goodman and Wallach at the beginning of the 1980's. 
{ As a direct application, in the context of quantum cohomology of affine flag manifolds, results that were known to hold only for some particular Lie types can now be extended to all types.}
\end{abstract}
\maketitle

\section{Statement of the main result}\label{first}  We first recall the notion of $``ax+b"$ Lie algebra, as defined by Goodman and Wallach
in \cite{Go-Wa1}. (In fact, the definition in { that} paper is more general:
we only present here a special case which is relevant for our goal, { which is to establish the
complete integrability of the periodic quantum Toda lattice}.)
  Let $\mfa$ and $\mfu$ be two finite dimensional real vector spaces
and endow $\mfb:=\mfa\oplus \mfu$  with a Lie bracket $[\cdot, \cdot]$ and an inner product 
$\langle \cdot, \cdot \rangle$ such that:
\begin{itemize}
\item[1.] $\mfa$ is orthogonal to $\mfu$; 
\item[2.] $\mfa$ is a  Lie subalgebra of $\mfb$, $\mfu$ is an ideal of $\mfb$, and
 both $\mfa$ and $\mfu$ are commutative Lie algebras;
 \item[3.] for each $H\in \mfa$, ${\rm ad}(H)|_\mfu$ is a self-adjoint endomorphism of $\mfu$;
 \item[4.] one has the eigenspace decomposition  
$\mfu=\oplus_{\alpha \in \Psi} \mfu_\alpha$,  where 
$$\mfu_\alpha :=\{X\in \mfu\mid [H, X]=\alpha(H)X \  {\rm for \ all \ } H \in \mfa\},$$for some finite subset $\Psi\subset \mfa^*$,
such that   
 $\mfu_\alpha$ are all one-dimensional vector spaces;
\item[5.] there exists an irreducible root system $\Pi\subset \mfa^*$, a system of simple roots $\Delta=\{\alpha_1,\ldots, \alpha_r\}\subset \Pi$, and a 
dominant root $\beta\in \Pi$ such that  $\Psi = \Delta \cup \{-\beta\}$.
\end{itemize}  

Recall that, by definition, a root $\beta$ is {\it dominant} if $\langle \beta, \alpha_i \rangle \ge0$ for all $1\le i \le r$.
If the root system $\Pi$ is simply laced, there exists a unique dominant root, which is the highest root.
If $\Pi$ is of one of the types $B_r, C_r, F_4$, or $G_2$, then there are two dominant roots: one is long (i.e., the highest root)
 and the other  is short. We refer to \cite[Fig.~5.1]{Go-Wa1} for the precise description. 
 The Lie algebra $\mfb$ is essentially determined  by the root system $\Pi$, the simple roots $\Delta$,  and the dominant root $\beta$, see   
 \cite[p.~362]{Go-Wa1}. Let $U({\mfb})$ be the universal enveloping algebra of $\mfb$, which comes equipped 
 with a canonical grading.  
If $Z_1,\ldots, Z_{2r+1}$ is a basis of $\mfb$ and $(c^{ij})_{1\le i,j\le 2r+1}$ the matrix inverse to 
$(\langle Z_i, Z_j\rangle)_{1\le i,j\le 2r+1}$ then the element 
 $$\Omega:=\sum_{i,j=1}^{2r+1} c^{ij}Z_iZ_j$$
of  $U_2(\mfb)$   
 is independent of the choice of the basis and is called the {\it Laplacian} of $\mfb$, 
 see, e.g., \cite[Proposition 1.3.1]{Ap}. 
 
Denote $\alpha_0:=-\beta$ and consider an orthonormal basis $X_0, X_1,\ldots, X_r$ of $\mfu$ such that
$X_i \in \mfu_{\alpha_i}$, for all $0\le i \le r$.  Also consider $H_1, \ldots, H_r \in \mfa$ such that
$$\alpha_i(H_j)=\delta_{ij}, \ {\rm for \ all \ } 1\le i, j \le r.$$
One has  
\begin{equation}\label{laplace}
\Omega= \sum_{i,j=1}^r \langle \alpha_i,\alpha_j\rangle H_i H_j +\sum_{i=0}^rX_i^2,
\end{equation}
where $\langle \cdot, \cdot \rangle$ is the inner product on $\mfa^*$ induced by the canonical isomorphism $\mfa^*\simeq \mfa$. By the PBW theorem, $U(\mfb)$ has a basis given by monomials of the form $X_{{i_1}} \cdots X_{{i_k}} H_{j_1} \cdots H_{j_\ell}$ where $0 \le i_1, \ldots , i_k \le r$ and $1 \le j_1 , \ldots , j_\ell \le r$. Let $U(\mfb)_\eve$ be the linear subspace of $U(\mfb)$ spanned by the monomials of the form $X_{{i_1}}^2 \cdots X_{{i_k}}^2 H_{j_1} \cdots H_{j_\ell}$ where $0 \le i_1, \ldots , i_k \le r$ and $1 \le j_1 , \ldots , j_\ell \le r$.
The projection $\mfb \to \mfa$ is a Lie algebra homomorphism and induces an algebra homomorphism $\mu: U(\mfb) \to U(\mfa) = {\rm Sym}(\mfa)$ called the {\em symbol map}. 
Let $W$ be the Weyl group of $\Pi$, which acts on $\mfa$, and consequently also on the symmetric algebra ${\rm Sym}(\mfa)$.
Choose fundamental homogeneous generators $u_1, \ldots, u_r$ of
${\rm Sym}(\mfa)^{{W}}$; by convention  
 take $u_1:=\sum_{i,j=1}^r \langle\alpha_i, \alpha_j\rangle H_i H_j$.  
  
The goal of this note is to prove the following result, which was conjectured by Goodman and Wallach in \cite{Go-Wa1}.
(We say that $\Gamma \in U(\mfb)$ {\it has degree} $n$ if $\Gamma \in U_n(\mfb) \setminus U_{n-1}(\mfb)$, where
$U_0(\mfb) \subset U_1(\mfb) \subset \ldots$ is the canonical filtration of the universal enveloping algebra.)

\begin{theorem}\label{thm:main} For any $i\in \{1,\ldots, r \}$ there exists a unique 
$\Omega_i \in U(\mfb)_{\eve}$ such that:

\begin{itemize} 
\item $[\Omega_i, \Omega]=0$;

\item $\Omega_i$ has degree equal to $\deg u_i$;

\item the symbol of $\Omega_i$ is  $u_i $,  the chosen  generator of ${\rm Sym}(\mfa)^W$.
\end{itemize} 

Furthermore, for any $i, j\in \{1, \ldots, r\}$ one has $[\Omega_i, \Omega_j]=0$, where $\Omega_1:=\Omega$.  
\end{theorem}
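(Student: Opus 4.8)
The plan is to handle the existence--uniqueness part and the ``furthermore'' part by a single mechanism built from the grading of $U(\mfb)$ by $X$-multidegree. Since $[H_j,X_i]=\alpha_i(H_j)X_i$ preserves the exponent of each $X_i$, the algebra $U(\mfb)$ is graded by $\mathbb{Z}_{\ge 0}^{r+1}$ (the multidegree in $X_0,\ldots,X_r$); equivalently the involution $\theta$ with $\theta|_\mfa=\mathrm{id}$ and $\theta|_\mfu=-\mathrm{id}$ is an automorphism, and $U(\mfb)_\eve$ is exactly the sum of the multidegree components with all entries even. In particular $U(\mfb)_\eve$ is a subalgebra closed under $[\cdot,\cdot]$. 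Granting existence and uniqueness, commutativity then reduces to a formal computation: for $i,j$ the Jacobi identity together with $[\Omega_i,\Omega]=[\Omega_j,\Omega]=0$ gives $[[\Omega_i,\Omega_j],\Omega]=0$; the commutator lies in $U(\mfb)_\eve$; and since $\mu$ is an algebra map into the commutative ring $\mathrm{Sym}(\mfa)$ its symbol is $[u_i,u_j]=0$. It remains only to see that an even element commuting with $\Omega$, of symbol $0$ and of the relevant degree, must vanish.

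The heart of the matter is the operator $\mathrm{ad}(\Omega)$. Writing $\Omega=\Omega^{(0)}+\Omega^{(2)}$ with $\Omega^{(0)}=u_1=\sum_{i,j}\langle\alpha_i,\alpha_j\rangle H_iH_j$ of $X$-degree $0$ and $\Omega^{(2)}=\sum_i X_i^2$ of $X$-degree $2$, one has $\mathrm{ad}(\Omega)=D_0+D_2$ where $D_0:=\mathrm{ad}(\Omega^{(0)})$ preserves total $X$-degree and $D_2:=\mathrm{ad}(\Omega^{(2)})$ raises it by $2$. A direct computation shows that for any $M$ of $\mfa$-weight $\gamma$ one has $D_0(M)=M\,(2H_\gamma+|\gamma|^2)$, where $H_\gamma\in\mfa$ corresponds to $\gamma$ under $\mfa\simeq\mfa^*$; hence $D_0$ annihilates the weight-$0$ part and acts injectively (right multiplication by a non-zero-divisor) on each weight $\gamma\neq0$. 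The weight-$0$ monomials in the $X_i$ are precisely the powers of $N:=X_0X_1^{c_1}\cdots X_r^{c_r}$, where $\beta=\sum c_i\alpha_i$, so the smallest nonzero even weight-$0$ element has $X$-degree $2(1+\he(\beta))$, which exceeds every $d_i:=\deg u_i$ (it equals $2h$ when $\beta$ is the highest root, and still beats $h=\max_i d_i$ for the short dominant roots). For existence one seeks $\Omega_i=\sum_{k\ge0}P_{2k}$, with $P_{2k}$ of $X$-degree $2k$, solving $\mathrm{ad}(\Omega)\Omega_i=0$, i.e.\ $D_0P_{2k}=-D_2P_{2k-2}$ with $P_0=u_i$; inverting $D_0$ off weight $0$ lowers filtration degree by one, so the filtration degree stays $\le d_i$, the $X$-degree is bounded by it, and the process terminates. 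This simultaneously yields uniqueness: the difference of two solutions is even, commutes with $\Omega$, has symbol $0$ and degree $\le d_i$, so its lowest $X$-degree part lies in $\ker D_0$, i.e.\ is a nonzero even weight-$0$ element of $X$-degree $\le d_i<2(1+\he(\beta))$ --- impossible.

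The genuine obstacle is solvability at each recursive step. Because $D_2=\sum_i\mathrm{ad}(X_i^2)$ shifts weight by $2\alpha_i$, the equation couples weights, and on each weight $\gamma\neq0$ the right-hand side $-D_2P_{2k-2}$ must be divisible in $\mathrm{Sym}(\mfa)$ by $2H_\gamma+|\gamma|^2$, while its weight-$0$ component must vanish outright. These divisibility and vanishing conditions --- which encode the affine-Weyl invariance attached to the $W$-invariant symbol $u_i$ --- are exactly what Goodman--Wallach verified for the classical types and $E_6$. My plan for the rest is to exploit foldings of Dynkin diagrams: every non-simply-laced $\Psi$ (types $B_r,C_r,F_4,G_2$) arises from a simply-laced $\tilde\Psi$ (types $A,D,E_6$) via a diagram automorphism $\tau$, under which $\mfb$ is identified with the $\tau$-fixed subalgebra of the simply-laced $\tilde{\mfb}$ and the extended-diagram folding produces the appropriate dominant root $\beta$; the simply-laced cases $E_7,E_8$ must be treated directly, and verifying the divisibility conditions there is the new technical input. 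One first establishes existence, uniqueness and divisibility for all simply-laced $\Psi$, and then pushes the resulting $\tilde\Omega_i$ through the folding, using $\tau$-invariance of $\tilde\Omega$ and of the $\tilde\Omega_i$ to restrict them to elements of $U(\mfb)_\eve$ with the prescribed symbols.

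The decisive point, and what I expect to be the main difficulty, is that the clean degree argument for commutativity only closes in the simply-laced case. There $\beta$ is the highest root, $1+\he(\beta)=h$, and the minimal even weight-$0$ element has degree $2h$, strictly above the bound $\deg[\Omega_i,\Omega_j]\le d_i+d_j-1\le 2h-1$; the lowest-$X$-degree analysis of the second paragraph then forces $[\Omega_i,\Omega_j]=0$ for all pairs. For the short-root types $\beta$ has smaller height, the threshold $2(1+\he(\beta))$ may fall below $d_i+d_j-1$, and the presence of the genuine element $N^2\in\ker(\mathrm{ad}\,\Omega)\cap U(\mfb)_\eve$ with zero symbol blocks any direct symbol argument. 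I would therefore deduce commutativity in those types not directly but by folding --- realizing $\Omega_i$ as the restriction of $\tilde\Omega_i$ and $[\Omega_i,\Omega_j]$ as the restriction of the already-vanishing $[\tilde\Omega_i,\tilde\Omega_j]$. The crux is thus to set up the fixed-point correspondence carefully enough that both the recursive obstructions and the vanishing of the simply-laced commutators descend, which requires checking compatibility of the $\tau$-fixed identification with $\mathrm{ad}(\Omega)$, the symbol map $\mu$, and the even grading.
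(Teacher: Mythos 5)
Your recursion scheme (splitting $\mathrm{ad}(\Omega)=D_0+D_2$ by $X$-degree, with $D_0$ acting on a weight-$\gamma$ component as right multiplication by $2t_\gamma+\langle\gamma,\gamma\rangle$) is correct, and your uniqueness argument is a valid re-derivation of what the paper quotes from Goodman--Wallach (Proposition \ref{degree}: an even element of zero symbol commuting with $\Omega$ is a multiple of $(X_0X_1^{m_1}\cdots X_r^{m_r})^2$, of degree at least $2(1+\sum_i m_i)$, which exceeds every $\deg u_i$). But there is a genuine gap at the heart of the existence claim. You correctly reduce existence to solvability of $D_0P_{2k}=-D_2P_{2k-2}$, i.e.\ to divisibility of the right-hand side by $2t_\gamma+\langle\gamma,\gamma\rangle$ on each weight $\gamma\neq0$ together with vanishing of its weight-zero component --- and then you supply no mechanism whatsoever for verifying these conditions, deferring to Goodman--Wallach for the classical types and $E_6$ and declaring the $E_7$, $E_8$ verification ``the new technical input.'' That verification \emph{is} the theorem: it is exactly where Goodman--Wallach's methods stopped, and an invariant of degree $30$ in $E_8$ is not going to be handled by an unspecified direct check. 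The paper's route is entirely different at this point: it imports Etingof's construction, realizing the operators via Whittaker functions on the completed affine Kac--Moody group and the Feigin--Frenkel central elements $Y_i=u_i+Y_i^+$ at the critical level $c=-h^\vee$; this manufactures the commuting family $D_1,\ldots,D_r$ (hence, after the substitutions $e^{\alpha_i/2}\mapsto \tfrac{\sqrt{-1}}{2\sqrt2}X_i$, etc., and passing to real parts, the $\Omega_i$) \emph{uniformly for every irreducible $\Pi$ with $\beta$ the highest root}, with no case-by-case divisibility analysis. Without this or an equivalent input, your proposal proves uniqueness but not existence.

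A secondary but structurally consequential error: your assertion that the degree argument for commutativity ``only closes in the simply-laced case'' is false. The table of degrees shows $\deg u_i+\deg u_j-1<2(1+\sum_i m_i)$ whenever $\beta$ is \emph{long}, in every type (e.g.\ $F_4$ long: $19<24$; $G_2$ long: $7<12$; $B_r,C_r$ long: $4r-3<4r$), which is precisely how the paper closes commutativity for all highest-root cases simultaneously. Consequently your plan to reach $B_r$, $C_r$, and the long-root $F_4$, $G_2$ by further foldings is unnecessary, and it would saddle you with additional unverified folding correspondences; the only cases genuinely requiring folding are $F_4$ and $G_2$ with $\beta$ short, via $E_7^{(1)}\to E_6^{(2)}$ and $E_6^{(1)}\to D_4^{(3)}$, exactly as the paper does. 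For those cases your final step --- realizing $[\Omega_i,\Omega_j]$ as ``the restriction'' of the vanishing $[\tilde\Omega_i,\tilde\Omega_j]$ --- glosses over the fact that the projection $\nu:U(\mfb)\to U(\mfb')$ along $\langle U(\mfb)\mfb''\rangle$ is \emph{not} an algebra homomorphism (the paper records this explicitly); the repair is the $\tau$-eigenspace lemma (elements with $\tau$-eigenvalue $-1$, resp.\ $\ve$, $\ve^2$, lie in $\langle U(\mfb)\mfb''\rangle$), which kills all cross terms such as $\Omega_i''\nu(\Omega_j)$ in the expansion of $\Omega_i\Omega_j-\Omega_j\Omega_i$, together with Mehta's explicit invariants to check that $\nu(u_i)$ are fundamental $W_{F_4}$-, resp.\ $W_{G_2}$-invariants, the verification that $\nu$ preserves the even subspace, and (in the $G_2$ case, where the order-$3$ automorphism forces complexification) a final real-form correction splitting $U(\mfb'_\bC)_{\rm ev}=U(\mfb')_{\rm ev}\oplus\sqrt{-1}\,U(\mfb')_{\rm ev}$. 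You correctly name these compatibilities as ``the crux'' but do not carry any of them out, so both pillars of your plan --- existence in the base cases and descent through the folding --- remain unproved.
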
 


{ The theorem was proved in \cite{Go-Wa1, Go-Wa2} in the cases when $\Pi$ is of type 
$A_r$, $B_r$, $C_r$, $D_r$, or $E_6$
(note that when $\Pi$ is of type $B_r$ or $C_r$, there are two situations to be considered: when $\beta$ is a long root
and when $\beta$ is a short root). The results are nicely summarized in \cite[Theorem 3.3]{Go-Wa2}. 
The importance of Theorem \ref{thm:main} resides in that it automatically implies the complete integrability of the periodic quantum Toda lattice
associated to the extended Dynkin diagram $\Delta\cup\{-\beta\}$, as explained in \cite{Go-Wa1}.
More recently, in the case when $\beta$ is the highest root, the aforementioned integrability was established by 
Etingof in \cite{etingof}.  Thus the only remaining cases are when $\Pi$ is of type $F_4$ or $G_2$ and 
$\beta$ is short. These are the cases that are addressed in the main body of this paper.
For the sake of  completeness and clarity we have considered necessary to see exactly how Etingof's approach
can be adapted to prove Theorem \ref{thm:main} in the case when  $\beta$ is the highest root:
this is done in Appendix \ref{trei}.}

\begin{remark}
{\rm Theorem \ref{thm:main} was used in \cite{Ma1} and \cite{MM} in connection with the quantum cohomology ring of 
affine flag manifolds.}
\end{remark}

Here is an outline of the paper. { We first prove the uniqueness part in the theorem, see Section  
\ref{doi}. The main results are proved in  Sections \ref{patru} and \ref{cinci}, where we consider 
the cases when $\Pi$ is of one of the types $F_4$ or $G_2$, respectively, and $\beta$ is short.
Finally, Appendix \ref{trei} deals with Theorem \ref{thm:main} in the case when 
$\beta$ is the highest root, 
along the lines of Etingof's original proof from \cite{etingof}. }

\noindent{\bf Acknowledgements.} I would like to thank Leonardo Mihalcea for discussions about
the topics of this paper. I also thank the referee for valuable comments.

\section{Uniqueness of $\Omega_1,\ldots, \Omega_r$}\label{doi}
We continue the notation of the introduction. Since $\beta$ is a dominant root, in the expansion
\begin{equation}\label{domroot} \beta = m_1 \alpha_1 +\cdots + m_r \alpha_r\end{equation}
the coefficients $m_i$ are all strictly positive integers.
The following result is a direct consequence of \cite[Lemma 3.6]{Go-Wa1}.

\begin{proposition}\label{degree} {\rm (\cite{Go-Wa1})} If $\Gamma \in U(\mfb)_\eve$ commutes with $\Omega$ then $\Gamma$ is a 
multiple of $(X_0X_1^{m_1} \cdots X_r^{m_r})^2$. In particular, if $\Gamma \neq 0$, then 
the degree of $\Gamma$ is at least equal to $2 +2\sum_{i=1}^r m_i$.
\end{proposition}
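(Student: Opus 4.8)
The plan is to exploit the grading of $U(\mfb)$ by the root lattice $Q=\bigoplus_{i=1}^r\bZ\alpha_i$, in which $X_i$ carries the weight $\alpha_i$ (recall $\alpha_0=-\beta=-\sum_{i=1}^r m_i\alpha_i$) and each $H_j$ the weight $0$; thus $U(\mfb)=\bigoplus_{\lambda\in Q}U(\mfb)_\lambda$, and the symbol map $\mu$ is the projection onto the monomials free of $X$-factors. Two preliminary remarks guide everything. First, the element $D:=(X_0X_1^{m_1}\cdots X_r^{m_r})^2$ has weight $0$ and lies in the abelian subalgebra $U(\mfu)$, hence is \emph{central} in $U(\mfb)$; in particular it is even and commutes with $\Omega$, so the lower bound to be proved is attained and ``multiple of $D$'' means membership in the principal ideal $(D)$. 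Second, since $\Omega$ is itself even and commutes with $\Omega$ while having symbol $u_1\neq 0$, the conclusion can only hold for $\Gamma$ with vanishing symbol; I therefore read the statement (as is forced by the application to uniqueness, where $\Gamma=\Omega_i-\Omega_i'$) with the standing hypothesis $\mu(\Gamma)=0$. Decomposing $\Omega=L+\sum_{i=0}^r X_i^2$ with $L:=\sum_{i,j=1}^r\langle\alpha_i,\alpha_j\rangle H_iH_j$ its weight-$0$ part, so that $\mathrm{ad}(L)$ preserves weights and $\mathrm{ad}(X_i^2)$ shifts them by $2\alpha_i$, and writing $\Gamma=\sum_\lambda\Gamma_\lambda$, the equation $[\Gamma,\Omega]=0$ becomes the weight-graded recursion
\[
[\Gamma_\mu,L]+\sum_{i=0}^r[\Gamma_{\mu-2\alpha_i},X_i^2]=0 ,
\]
which is the system I would analyze.

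Next I would extract constraints by an extremal-weight argument. Fixing a generic functional $\phi$ on $Q$ with $\phi(\alpha_i)>0$ for $1\le i\le r$ (so $\phi(\alpha_0)<0$), let $\mu^\ast$ be $\phi$-maximal in the support of $\Gamma$; evaluating the recursion at $\mu^\ast+2\alpha_{i_0}$, with $i_0$ maximizing $\phi(\alpha_{i_0})$, annihilates every term except $[\Gamma_{\mu^\ast},X_{i_0}^2]$, which must therefore vanish. Because conjugation by $X_{i_0}$ acts on the $\mathrm{Sym}(\mfa)$-coefficient of a monomial by the shift $H_{i_0}\mapsto H_{i_0}+1$, this forces those coefficients to be independent of $H_{i_0}$; propagating such identities through the recursion produces a cascade of harmonicity conditions on the coefficients $\Gamma_\lambda$. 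The crux --- and the step I expect to be the real obstacle --- is to turn this cascade into the divisibility statement $D\mid\Gamma$, i.e. to show that the hypotheses $\mu(\Gamma)=0$ (enough $X$-factors) and $\Gamma\in U(\mfb)_\eve$ (they come in pairs) force every PBW monomial of $\Gamma$ to contain $X_0^2X_1^{2m_1}\cdots X_r^{2m_r}$. This requires a careful global bookkeeping that keeps track of the coupling introduced by the affine node $\alpha_0=-\beta$, which ties the top and the bottom of the $\phi$-range together and is precisely what separates the periodic lattice from the open one; this bookkeeping is the content of \cite[Lemma 3.6]{Go-Wa1}.

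Granting $D\mid\Gamma$, the conclusion is immediate. The element $\Gamma$ lies in the principal ideal generated by the central element $D$, i.e. it is a multiple of $D$. For the degree count, $D_0:=X_0X_1^{m_1}\cdots X_r^{m_r}$ has degree $1+\sum_{i=1}^r m_i$ --- the lone factor $X_0$ makes $D_0$ odd, which is exactly why evenness forces its square --- so $\deg D=2+2\sum_{i=1}^r m_i$; and since $\mathrm{gr}\,U(\mfb)=\mathrm{Sym}(\mfb)$ is a domain and $D$ is central, any nonzero $\Gamma=D\Gamma'$ has $\deg\Gamma=\deg D+\deg\Gamma'\ge 2+2\sum_{i=1}^r m_i$. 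This gives the asserted lower bound.
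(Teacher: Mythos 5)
The paper does not actually prove Proposition \ref{degree}: it derives it as ``a direct consequence of \cite[Lemma 3.6]{Go-Wa1}'', and your attempt, after its preliminaries, defers exactly the same crux --- that the commutation equations force divisibility by $D=(X_0X_1^{m_1}\cdots X_r^{m_r})^2$ --- to that very lemma; so in substance your treatment coincides with the paper's, with your weight-graded recursion $[\Gamma_\mu,L]+\sum_{i=0}^r[\Gamma_{\mu-2\alpha_i},X_i^2]=0$ and the extremal-weight step serving only as an (accurate, but partial) sketch of what Goodman--Wallach's lemma accomplishes. The parts you do prove are sound: $D$ has weight zero and lies in the commutative ideal $U(\mfu)$, hence is central, even, of zero symbol, and commutes with $\Omega$, so the bound is attained; the vanishing $[\Gamma_{\mu^\ast},X_{i_0}^2]=0$ does force the $\mathrm{Sym}(\mfa)$-coefficients of $\Gamma_{\mu^\ast}$ to be independent of $H_{i_0}$ (commuting $X_{i_0}$ past a polynomial in the $H_j$ shifts $H_{i_0}$ by $\pm 1$, and a polynomial periodic in one variable is constant in it); and since ${\rm gr}\, U(\mfb)\simeq {\rm Sym}(\mfb)$ is a domain, degrees add in $\Gamma=D\Gamma'$, giving $\deg\Gamma\ge 2+2\sum_{i=1}^r m_i$. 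Your one substantive addition is also correct and worth recording: as literally stated the proposition fails for $\Gamma=\Omega$ (which is even, commutes with $\Omega$, and has degree $2$), so the tacit hypothesis $\mu(\Gamma)=0$ must be read in; the paper itself only ever applies the proposition under that hypothesis --- in the uniqueness corollary of Section \ref{doi} the element $\Omega_i-\Omega_i'$ has symbol $u_i-u_i=0$, and in the proof of Proposition \ref{long} the identity $\mu([\Omega_i',\Omega_j'])=0$ is explicitly verified before Proposition \ref{degree} is invoked. Just be aware that, as a standalone argument, your text establishes the proposition only modulo the citation of \cite[Lemma 3.6]{Go-Wa1} --- which is precisely the paper's own standard of proof here.
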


The  table below contains the degrees of the fundamental $W$-invariant polynomials $u_1, \ldots, u_r$ 
and the sum of the coefficients $m_1, \ldots, m_r$ in equation (\ref{domroot}). The information is extracted from 
\cite[p.~477]{He} and  \cite[p.~372]{Go-Wa1}.   

\begin{tabular}{|c|l|l|l|}
\hline
Type & $\deg u_1, \ldots , \deg u_r$ & $\sum_{i=1}^r m_i$ ($\beta$ long) &  $\sum_{i=1}^r m_i$ ($\beta$ short)\\
\hline
$A_r$ ($r\ge 1$) & $2,3, \ldots , r+1$ & $r$ & $r$\\
$B_r$ ($r\ge 2$) & $2,4,6, \ldots , 2r$ & $2r-1$ & $r$\\
$C_r$ ($r\ge 2$) & $2,4,6,\ldots , 2r$ & $2r-1$ & $2r-2$\\
$D_r$ ($r\ge 3$) & $2,4,6,\ldots ,2r-2,r$ & $2r-3$ & $2r-3$\\
$E_6$ & $2,5,6,8,9,12$ & $11$ & $11$\\
$E_7$ & $2,6,8,10,12,14,18$ & $17$ & $17$\\
$E_8$ & $2,8,12,14,18,20,24,30$ & $29$ & $29$\\
$F_4$ & $2,6,8,12$ & $11$ & $8$\\
$G_2$ & $2,6$  & $5$ & $3$\\
\hline
\end{tabular}

The table  shows that $\deg u_1,\ldots,\deg u_r$ are always at most equal to $2\sum_{i=1}^rm_i$.
Thus Proposition \ref{degree} immediately  implies as follows. 

\begin{corollary} For fixed $i\in \{1, \ldots, r\}$, there exists at most one 
$\Omega_i \in U(\mfb)_{\eve}$ such that $[\Omega_i,\Omega]=0$, $\deg \Omega_i =\deg u_i$, and 
the symbol of $\Omega_i$ is equal to $u_i$.
\end{corollary}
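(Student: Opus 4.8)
The plan is to deduce the corollary directly from Proposition~\ref{degree} together with the degree inequality recorded in the table, exploiting the linearity of the defining conditions. Suppose $\Omega_i,\Omega_i'\in U(\mfb)_\eve$ each satisfy $[\Omega_i,\Omega]=[\Omega_i',\Omega]=0$, $\deg\Omega_i=\deg\Omega_i'=\deg u_i$, and have symbol $u_i$. Set $\Gamma:=\Omega_i-\Omega_i'$. First I would observe that $U(\mfb)_\eve$ is by definition a linear subspace of $U(\mfb)$, so $\Gamma\in U(\mfb)_\eve$; and since the commutator bracket is bilinear, $[\Gamma,\Omega]=[\Omega_i,\Omega]-[\Omega_i',\Omega]=0$. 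Thus $\Gamma$ is an element of $U(\mfb)_\eve$ commuting with $\Omega$, exactly the hypothesis of Proposition~\ref{degree}.

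Next I would bound $\deg\Gamma$ from above. Since both $\Omega_i$ and $\Omega_i'$ lie in the filtration level $U_{\deg u_i}(\mfb)$, so does their difference, whence $\deg\Gamma\le\deg u_i$; note that subtraction can only lower the degree, so the symbol hypothesis is not even needed here. The paragraph preceding the corollary records, from the table, that $\deg u_i\le 2\sum_{j=1}^r m_j$ for every $i$ and every type. Therefore $\deg\Gamma\le 2\sum_{j=1}^r m_j$.

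Finally, I would invoke Proposition~\ref{degree} to force $\Gamma=0$. That proposition asserts that a nonzero element of $U(\mfb)_\eve$ commuting with $\Omega$ has degree at least $2+2\sum_{j=1}^r m_j$. If $\Gamma$ were nonzero we would obtain
\[
2+2\sum_{j=1}^r m_j \;\le\; \deg\Gamma \;\le\; 2\sum_{j=1}^r m_j ,
\]
which is absurd. Hence $\Gamma=0$, i.e.\ $\Omega_i=\Omega_i'$, establishing uniqueness.

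I do not expect any genuine obstacle in this argument: the substantive content sits upstream, in Proposition~\ref{degree} (already granted) and in the case-by-case verification—summarized in the table—that $\deg u_i\le 2\sum_j m_j$. Once those are in hand, the corollary is a purely formal consequence, the decisive point being merely the strict inequality $2\sum_j m_j < 2+2\sum_j m_j$, which leaves no room for a nonzero even element commuting with $\Omega$ in the relevant range of degrees. No further computation is required.
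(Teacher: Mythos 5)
Your skeleton is exactly the paper's: the paper derives the corollary by applying Proposition~\ref{degree} to the difference $\Gamma=\Omega_i-\Omega_i'$ and using the tabulated inequality $\deg u_i\le 2\sum_j m_j<2+2\sum_j m_j$. But your aside that ``the symbol hypothesis is not even needed'' exposes a genuine gap. Proposition~\ref{degree}, read literally as printed, is false: $\Omega$ itself lies in $U(\mfb)_\eve$, commutes with $\Omega$, and has degree $2$, well below $2+2\sum_j m_j$. Your argument, which deliberately discards the symbol condition, would likewise ``prove'' that $\Omega$ and $2\Omega$ --- two distinct elements of $U(\mfb)_\eve$ of degree $2=\deg u_1$ commuting with $\Omega$ --- must coincide. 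The intended statement (Goodman--Wallach's Lemma~3.6, and the way this paper itself invokes the proposition in the proof of Proposition~\ref{long}, where it first verifies $\mu([\Omega_i',\Omega_j'])=0$ before applying it) carries the extra hypothesis that the symbol of $\Gamma$ vanishes: only even elements commuting with $\Omega$ \emph{whose symbol is zero} are multiples of $(X_0X_1^{m_1}\cdots X_r^{m_r})^2$.

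With that corrected reading, the symbol condition in the corollary is essential rather than decorative: it is precisely what yields $\mu(\Gamma)=\mu(\Omega_i)-\mu(\Omega_i')=u_i-u_i=0$, placing $\Gamma$ within the scope of Proposition~\ref{degree}; after that, your degree count $\deg\Gamma\le\deg u_i\le 2\sum_j m_j<2+2\sum_j m_j$ forces $\Gamma=0$ exactly as you wrote. So the repair is one line --- record that the two symbols cancel --- and the retraction of the claim that the hypothesis is superfluous: without fixing the symbol, uniqueness is simply false (again, $\Omega$ versus $2\Omega$ for $i=1$).
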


   \section{The case when $\Pi$ is  of type $F_4$  and $\beta$ is short}\label{patru}
   
   We will prove Theorem \ref{thm:main} in the case mentioned in the title. 
     The proof's main tool is the folding of the extended (untwisted) Dynkin diagram $E_7^{(1)}$
     onto the extended twisted Dynkin diagram  $E_6^{(2)}$.  Figure \ref{E6}
     below is taken from  \cite[Section 3.3]{BST} and describes the folding map.
     \begin{figure}[h]
\begin{center}
\epsfig{figure=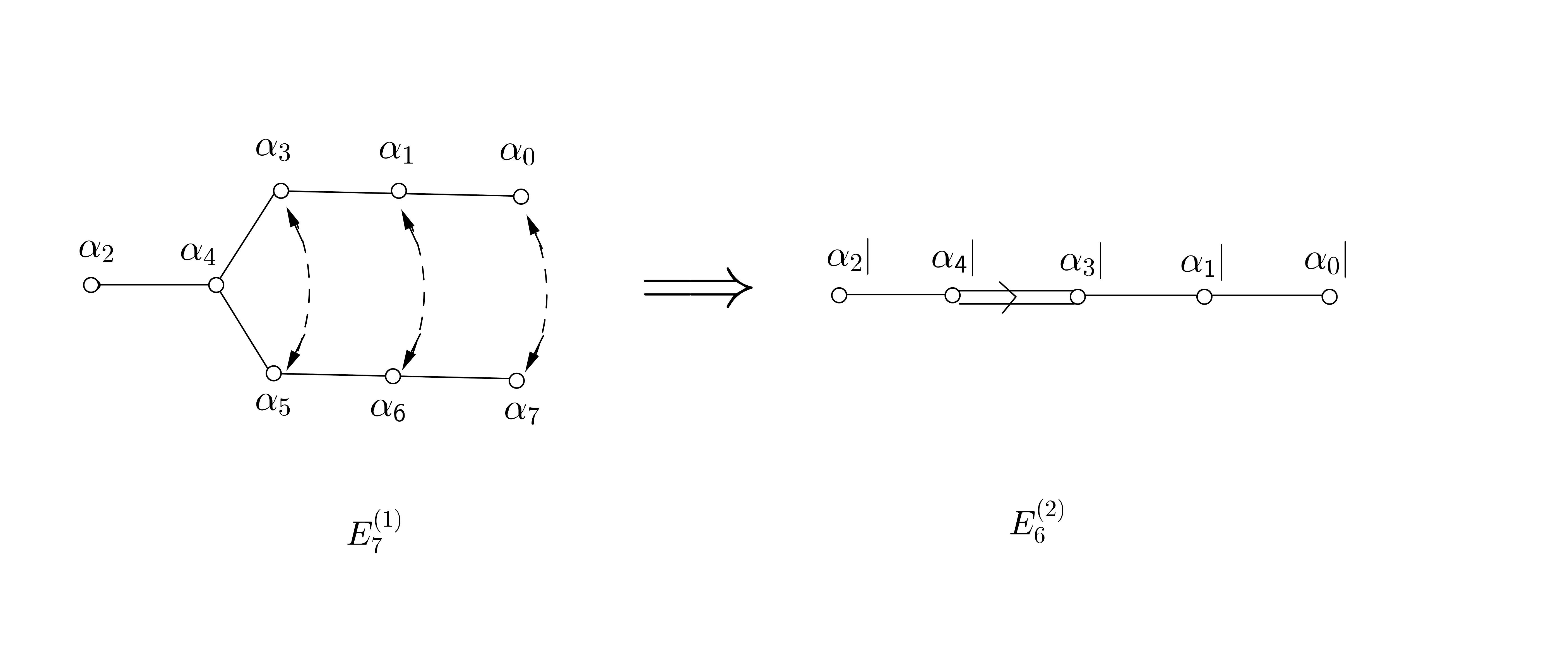,width=5in}
\end{center}
\begin{center}
\parbox{12in}{
\caption[]{The folding $E_7^{(1)}\to E_6^{(2)}.$ }\label{E6} }
\end{center}
\end{figure}   
 
 \noindent More specifically, let $\Pi \subset \mfa^*$ be the root system of type $E_7$ and pick  simple roots 
 $\{\alpha_1,\ldots,\alpha_7\}$ fitting the diagram above. As usual, $\alpha_0=-\theta$, where  
 $$\theta=2\alpha_1+2\alpha_2+3\alpha_3+4\alpha_4+3\alpha_5+2\alpha_6+\alpha_7$$ is  the corresponding highest root.
 Consider the automorphism $\tau$ of $\mfa^*$ which is determined by
       $$\tau(\alpha_1)=\alpha_6, \ \tau(\alpha_2) = \alpha_2, \ \tau(\alpha_3)=\alpha_5, \ 
       \tau(\alpha_4)=\alpha_4, \ \tau(\alpha_5) = \alpha_3, \ \tau(\alpha_6) = \alpha_1, \ \tau(\alpha_7) = \alpha_0.$$ 
       It follows that $\tau(\alpha_0)=\alpha_7$, hence $\tau$ is an  automorphism of order two  of $\mfa^*$ which permutes the elements of
       $\Pi$. Since $\mfa$ and $\mfa^*$ are canonically isomorphic, $\tau$ is also an automorphism 
       of $\mfa$. Let $\mfa'$ be the fixed point set of $\tau$.  
     Consider the restriction map $\mfa^*\to(\mfa')^*$ and denote the image of an arbitrary $\lambda\in \mfa^*$ by $\lambda|$. 
     The elements of $\Pi$ are mapped to a root system of type $F_4$, which we denote by $\Pi|$. 
     A simple root system for $\Pi|$ is $\{\alpha_1|$, $\alpha_2|$, $\alpha_3|$, $\alpha_4|\}$. Moreover, 
     $$   \theta| = 4\alpha_1| + 2\alpha_2| + 6\alpha_3| + 4\alpha_4|-\theta|$$
     thus
     $$\theta|= 2\alpha_1|+\alpha_2|+ 3\alpha_3| +2\alpha_4|,$$
     which is the short dominant root of $\Pi|$ (see \cite[p.~372]{Go-Wa1}). 
     
   By the definition in Section \ref{first}, the  ``$ax+b$" algebra associated to the extended Dynkin diagram $E_7^{(1)}$ is  $\mfb = \mfa\oplus {\rm Span}_\bR\{X_0, \ldots, X_7\}$, equipped with  the Lie bracket which is
   identically zero on both $\mfa$ and  ${\rm Span}_\bR\{X_0, \ldots, X_7\}$ and such that
   $$[H, X_i]=\alpha_i (H) X_i, \ {\rm for \ all \ } 0\le i \le 7 \ {\rm and  \ all \ } H \in \mfa.$$  
   We now extend $\tau:\mfa\to \mfa$ to the linear automorphism $\tau : \mfb \to \mfb$ such that:
   \begin{align*}{}&\tau(X_0) = X_7, \ \tau(X_1) = X_6, \ \tau (X_2) = X_2, \ \tau(X_3) = X_5, \\{}& \tau (X_4)=X_4, \ \tau(X_5) = X_3, \  \tau(X_6)=X_1, \
   \tau(X_7)=X_0.\end{align*}
   
   \begin{lemma}\label{autom} The map $\tau : \mfb \to \mfb$ is a Lie algebra automorphism.
   \end{lemma}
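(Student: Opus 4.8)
The plan is to verify directly that $\tau$ respects the bracket. Since $\tau$ permutes the basis vectors $X_0,\ldots,X_7$ and restricts to the linear involution of $\mfa$, it is a linear bijection (indeed $\tau^2=\mathrm{id}$), so the only thing to check is the multiplicativity $\tau([x,y])=[\tau(x),\tau(y)]$. By bilinearity of the bracket it suffices to verify this on pairs of basis vectors. The bracket vanishes on $\mfa\times\mfa$ and on $\mathrm{Span}_\bR\{X_0,\ldots,X_7\}\times\mathrm{Span}_\bR\{X_0,\ldots,X_7\}$, and $\tau$ preserves each of these two subspaces, so both sides vanish there. Hence the entire content of the lemma is the identity $\tau([H,X_i])=[\tau(H),\tau(X_i)]$ for $H\in\mfa$ and $0\le i\le 7$.

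To organize the bookkeeping I would write $\tau(\alpha_i)=\alpha_{i'}$ and $\tau(X_i)=X_{i'}$, where $i\mapsto i'$ is the involution of $\{0,1,\ldots,7\}$ interchanging $0\leftrightarrow 7$, $1\leftrightarrow 6$, $3\leftrightarrow 5$ and fixing $2$ and $4$; note that $\tau$ uses the \emph{same} index permutation on the roots and on the $X_i$, and that the relation $\tau(\alpha_0)=\alpha_7$ already recorded in the text makes this consistent even though $\alpha_0=-\theta$ is not simple. Computing the two sides, on the left one gets $\tau([H,X_i])=\alpha_i(H)\tau(X_i)=\alpha_i(H)X_{i'}$, while on the right $[\tau(H),\tau(X_i)]=[\tau(H),X_{i'}]=\alpha_{i'}(\tau(H))X_{i'}$. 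Thus the lemma reduces to the scalar identity
\begin{equation*}
\alpha_i(H)=\alpha_{i'}(\tau(H))\qquad\text{for all }H\in\mfa,\ 0\le i\le 7.
\end{equation*}

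The final step is to prove this identity, and here the key point is that $\tau$ is an isometry. Since $\tau$ is a linear automorphism of $\mfa^*$ permuting the roots of the irreducible, simply laced system $\Pi$, and all such roots have equal length while the $W$-invariant inner product is unique up to scaling, $\tau$ preserves this inner product; consequently the canonical isomorphism $\mfa^*\simeq\mfa$ is $\tau$-equivariant and the induced map on $\mfa$ is orthogonal as well. Using orthogonality together with $\tau^2=\mathrm{id}$, for any root $\alpha$ and any $H\in\mfa$ one then has $\alpha(\tau H)=\langle\alpha^\sharp,\tau H\rangle=\langle\tau^{-1}\alpha^\sharp,H\rangle=\langle\tau\alpha^\sharp,H\rangle=(\tau\alpha)(H)$, where the equivariance gives $\tau\alpha^\sharp=(\tau\alpha)^\sharp$. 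Applying this with $\alpha=\alpha_{i'}$ and using that $i\mapsto i'$ is an involution yields $\alpha_{i'}(\tau(H))=(\tau\alpha_{i'})(H)=\alpha_{(i')'}(H)=\alpha_i(H)$, which is exactly the needed identity. I expect the only genuine obstacle to be justifying that $\tau$ is orthogonal, so that its actions on $\mfa$ and on $\mfa^*$ are compatible under $\sharp$; once that is in hand everything else is bilinearity and index bookkeeping.
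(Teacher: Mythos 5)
Your proof is correct and takes essentially the same approach as the paper: the paper's proof likewise verifies $\tau([H,X_i])=[\tau(H),\tau(X_i)]$ on basis pairs, reducing to the scalar identity $\alpha_{i'}(\tau(H))=\alpha_i(H)$ (checked there only for $i=0$, in the form $\alpha_7(\tau(H))=\alpha_0(H)$). The only difference is that you explicitly justify this identity by proving $\tau$ is an isometry of $\mfa^*$ compatible with the isomorphism $\mfa^*\simeq\mfa$, a point the paper leaves implicit in its construction of the induced automorphism of $\mfa$.
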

   
   \begin{proof} One needs to check that $\tau([H, X_i]) =[\tau(H), \tau(X_i)]$, for all $H\in \mfa$ and all $0\le i \le 7.$
    We will confine ourselves to the case when $i=0$. We have $$\tau([H, X_0]) = \tau(\alpha_0(H)X_0)=\alpha_0(H)X_7,$$
    whereas
    $$[\tau(H),\tau(X_0)]= [\tau(H), X_7]=\alpha_7(\tau(H))X_7 =\alpha_0(H)X_7.$$
       \end{proof}

The eigenspace decomposition of $\tau$ is 
$\mfb = \mfb'\oplus \mfb''$, where $\mfb':=\{Z\in \mfb \mid \tau(Z)=Z\}$ and $\mfb''=\{Z\in \mfb \mid \tau(Z)=-Z\}$.
Note that $X_0+X_7, X_1+X_6, X_3+X_5, X_2, X_4\in \mfb'$ and 
 for any $H'\in \mfa'$ one has
 \begin{align} {}& [H', X_0+X_7]=\alpha_0(H')(X_0+X_7)\nonumber\\
 {}& [H', X_1+X_6]=\alpha_1(H')(X_1+X_6)\nonumber\\
 {}& [H', X_2]=\alpha_2(H')X_2\label{hx}\\
 {}& [H', X_3+X_5]=\alpha_3(H')(X_3+X_5)\nonumber\\
 {}& [H', X_4]=\alpha_4(H')X_4.\nonumber
 \end{align}
 That is, $\mfb'=\mfa'\oplus \mfu'$, where $\mfu'$ has a basis consisting of
 $$X'_0:=\frac{1}{\sqrt{2}}(X_0+X_7), X'_1:=\frac{1}{\sqrt{2}}(X_1+X_6), X'_2:=X_2, X_3':=\frac{1}{\sqrt{2}}(X_3+X_5), X_4':=X_4.$$
 We have inserted  the factors of $1/\sqrt{2}$ for later use, see equation (\ref{concl}) below.  
 Equations (\ref{hx}) are telling us that $\mfb'$ is a Lie subalgebra of $\mfb$ isomorphic to the ``$ax+b$" algebra that corresponds to the Dynkin diagram $E_6^{(2)}$ (that is, $F_4$ extended with the short dominant root).

Denote by $\langle U(\mfb)\mfb''\rangle$ the linear span of $U(\mfb)\mfb''$.

\begin{lemma}\label{uofb}
 One has \begin{equation}\label{decomp}U(\mfb) = U(\mfb')\oplus \langle U(\mfb)\mfb''\rangle.\end{equation}
 \end{lemma}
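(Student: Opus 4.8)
We want to show that $U(\mfb) = U(\mfb')\oplus \langle U(\mfb)\mfb''\rangle$. The plan is to exploit the $\tau$-eigenspace decomposition $\mfb = \mfb'\oplus \mfb''$ together with the PBW theorem. The key observation is that $\tau$ is an involution, so it splits $\mfb$ into its $(+1)$- and $(-1)$-eigenspaces, and every element of $\mfb$ is a sum of a ``symmetric'' part in $\mfb'$ and an ``antisymmetric'' part in $\mfb''$.

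**The main steps.**
First I would choose an ordered PBW basis of $\mfb$ that is adapted to the decomposition $\mfb = \mfb'\oplus\mfb''$: pick a basis $Y_1,\ldots,Y_s$ of $\mfb'$ followed by a basis $Y_{s+1},\ldots,Y_{2r+1}$ of $\mfb''$, and order the basis so that all $\mfb'$-elements precede all $\mfb''$-elements. By PBW, ordered monomials $Y_{i_1}\cdots Y_{i_m}$ with $i_1\le\cdots\le i_m$ form a basis of $U(\mfb)$. I would then split these monomials into two classes: those involving only indices $\le s$ (i.e. only factors from $\mfb'$), which manifestly span $U(\mfb')$, and those containing at least one factor from $\mfb''$. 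For a monomial of the second type, since the $\mfb''$-factors come last in the ordering, it can be written as $(\text{product in }U(\mfb'))\cdot Y_{i_m}$ with $Y_{i_m}\in\mfb''$, hence it lies in $U(\mfb)\mfb''$ and therefore in $\langle U(\mfb)\mfb''\rangle$. This shows the sum $U(\mfb') + \langle U(\mfb)\mfb''\rangle$ exhausts $U(\mfb)$.

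**Verifying the directness of the sum.**
For the intersection $U(\mfb')\cap\langle U(\mfb)\mfb''\rangle = 0$, I would extend $\tau$ to an algebra automorphism of $U(\mfb)$ (which exists since $\tau$ is a Lie algebra automorphism by Lemma \ref{autom}) and show that the two summands are distinguished by $\tau$-behavior, or more directly argue via PBW normal forms. The cleanest route: reorder any element of $\langle U(\mfb)\mfb''\rangle$ into the adapted PBW basis and check that, modulo lower-degree terms coming from the bracket relations, each resulting ordered monomial still contains a factor from $\mfb''$; meanwhile elements of $U(\mfb')$ expand purely in monomials with all factors in $\mfb'$. Since these two families of PBW monomials are disjoint and linearly independent, their spans meet only in $0$. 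This gives the directness.

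**Anticipated obstacle.**
The main subtlety is the directness of the sum, specifically controlling the lower-order correction terms that arise when one commutes a $\mfb''$-factor past $\mfb'$-factors to reach PBW normal form. Because $\mfb'' $ need not be an ideal, a bracket $[Y_i, Y_j]$ with $Y_j\in\mfb''$ may land partly in $\mfb'$, which could in principle produce monomials lying entirely in $U(\mfb')$. The crucial structural fact that rescues the argument is that $\mfb'$ is a \emph{subalgebra} (so brackets among $\mfb'$-elements stay in $\mfb'$) and that $\tau$ acts as $-1$ on $\mfb''$, forcing $[\mfb',\mfb'']\subseteq\mfb''$ and $[\mfb'',\mfb'']\subseteq\mfb'$; tracking these $\bZ/2$-parities shows that no term of $\langle U(\mfb)\mfb''\rangle$ can collapse into $U(\mfb')$. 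I would make this precise either by a parity/grading argument using $\tau$ or by appeal to the standard fact that for a reductive pair $(\mfb,\mfb')$ with $\tau$-stable complement $\mfb''$ one has exactly such a PBW-type direct sum decomposition.
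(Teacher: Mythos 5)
Your spanning argument is essentially the paper's: choose a PBW basis adapted to $\mfb=\mfb'\oplus\mfb''$ with the $\mfb'$-factors first, and observe that every ordered monomial containing a $\mfb''$-factor ends in an element of $\mfb''$, hence lies in $U(\mfb)\mfb''$ (your parenthetical claim that the prefix lies in $U(\mfb')$ is not quite right when several $\mfb''$-factors occur, but a prefix in $U(\mfb)$ is all you need). The genuine problem is the directness, and there your proposal names the right danger --- $[\mfb'',\mfb'']\subseteq\mfb'$ --- but neither of your two suggested remedies closes it. The $\bZ/2$-parity argument cannot work: a product $ZW$ with $Z,W\in\mfb''$ is $\tau$-even, so $\langle U(\mfb)\mfb''\rangle$ contains plenty of $\tau$-fixed elements; what parity gives is only that $\tau$-odd elements lie in $\langle U(\mfb)\mfb''\rangle$ (the paper's Lemma \ref{abp}(b)), not a separation of $U(\mfb')$ from $\langle U(\mfb)\mfb''\rangle$. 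The appeal to a ``standard fact'' for reductive pairs is also unfounded when the complement is generated as a one-sided ideal: already in $U(\mathfrak{sl}_2(\bC))$ one has $h=ef-fe$, where both $ef$ and $fe$ end in a factor from $\mathfrak{n}^+\oplus\mathfrak{n}^-$; this is exactly why the Harish--Chandra decomposition $U(\mathfrak{g})=U(\mathfrak{h})\oplus(\mathfrak{n}^-U(\mathfrak{g})+U(\mathfrak{g})\mathfrak{n}^+)$ puts the two nilpotent parts on \emph{opposite} sides.

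In fact the collapse you worry about really occurs in the present situation. For $Z,W\in\mfb''$ the commutator $[Z,W]=ZW-WZ$ lies in $\langle U(\mfb)\mfb''\rangle$, since each of $ZW$ and $WZ$ has rightmost factor in $\mfb''$, while $[\mfb'',\mfb'']\subseteq\mfb'$ because $\tau$ is an automorphism. And this bracket is nonzero: for $H''\in\mfa''$ one has $\alpha_7=-\alpha_0$ on $\mfa''$ (since $\tau(\alpha_7)=\alpha_0$ and $\tau=-{\rm id}$ on $\mfa''$), so $[H'',X_0-X_7]=\alpha_0(H'')(X_0+X_7)$; in the paper's explicit $E_7$ coordinates, $H''=e_5$ gives $[e_5,X_0-X_7]=X_0+X_7=\sqrt{2}\,X_0'\neq 0$, a nonzero element of $\mfb'\cap\langle U(\mfb)\mfb''\rangle$ produced precisely by cancellation of the leading terms of two generators. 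So the leading-term bookkeeping you propose cannot be completed --- and you should be aware that the paper's own proof has the same blind spot: its ``hence'' step tacitly assumes that the top-degree monomials of the generators $Y^IZ^JZ_k$ survive in $\Gamma$, i.e., that no cancellation occurs. With $\langle U(\mfb)\mfb''\rangle$ read literally as the span of all products $aB$, $a\in U(\mfb)$, $B\in\mfb''$, the sum in (\ref{decomp}) is therefore \emph{not} direct; a complement of $U(\mfb')$ for which directness does hold is, e.g., the span of the adapted PBW monomials containing at least one $\mfb''$-factor, but that subspace is no longer visibly a left ideal, so the later uses of $\nu$ (notably the argument that the $\nu(\Omega_i)$ commute, which multiplies elements of the complement on the left) would have to be re-examined under any such repair.
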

 
 \begin{proof}  Pick bases $Y_1,\ldots, Y_9$ of $\mfb'$ and $Z_1,\ldots, Z_6$ of $\mfb''$. 
The corresponding PBW basis of $U(\mfb)$ consists of $Y^IZ^J$, where $I=(i_1,\ldots, i_9)$ and  $J=(j_1,\ldots, j_6)$ are vectors
with components in $\bZ_{\ge 0}$. The PBW basis of $U(\mfb')$ is $\{Y^I \}$,
hence
$U(\mfb) = U(\mfb')+ \langle U(\mfb)\mfb''\rangle$. We now show that the latter sum is direct. 
To this end, note that if $I$ and $J$ are  as before and $k\in\{1,\ldots, 6\}$, then 
$Y^IZ^JZ_k$ is equal to $Y^IZ_1^{j_1} \cdots Z_k^{j_k+1} \cdots Z_6^{j_6}$ plus terms whose degree is strictly smaller
than the degree of $Y^IZ^JZ_k$. Hence if  $\Gamma \in \langle U(\mfb)\mfb''\rangle$ has degree $m$ then
in its PBW expansion the degree $m$ monomials are of the form  $Y^IZ^J$, where at least one of $j_1,\ldots, j_6$ is non-zero; thus $\Gamma$ cannot be in $U(\mfb')$. 
\end{proof}

Denote by $\nu : U(\mfb)\to U(\mfb')$ the projection onto the first component of the decomposition
(\ref{decomp}). We will also need the eigenspace decomposition of $\tau|_{\mfa}$, which is $\mfa=\mfa'\oplus \mfa''$;
 note that $\nu$ maps $\mfa$ onto $\mfa'$ as the orthogonal projection map.

 \begin{remark} {\rm The map $\nu : U(\mfb)\to U(\mfb')$ is not a homomorphism of (associative) algebras.
 More precisely, its restriction $\mfb \to \mfb'$ is not a homomorphism of Lie algebras.  
For $\nu(X_0)=\nu(X_7)=\frac{1}{2}(X_0+X_7)$ and if $H\in \mfa$ is such that
$\alpha_0(H)=0$ and $\alpha_7(H)=1$ then 
$$[H, X_0]=0 \ {\rm and} \ [H, X_7]=X_7.$$
If $\nu$ were a homomorphism, then we would have simultaneously
$$[\nu(H) , X_0+X_7]=0 \ {\rm and} \ [\nu(H), X_0+X_7] = X_0+X_7,$$
which is impossible.}   
\end{remark}      
 
\begin{lemma}\label{even} The map $\nu$ transforms elements of $U(\mfb)_{\rm ev}$ into elements of
$U(\mfb')_{\rm ev}$. \end{lemma}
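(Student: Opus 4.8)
The plan is to compute $\nu$ explicitly on the spanning monomials of $U(\mfb)_{\rm ev}$ and read off that the image lands in $U(\mfb')_{\rm ev}$. First I would pin down $\nu$ concretely. Since $\tau$ has order two and preserves the inner product (it permutes the orthonormal basis $X_0,\dots,X_7$ and is an isometry of $\mfa$), the decomposition $\mfb=\mfb'\oplus\mfb''$ is orthogonal and $\bZ/2$-graded: $[\mfb',\mfb']\subseteq\mfb'$, $[\mfb',\mfb'']\subseteq\mfb''$, $[\mfb'',\mfb'']\subseteq\mfb'$. Writing $\mfu=\mfu'\oplus\mfu''$ and $\mfa=\mfa'\oplus\mfa''$ for the induced splittings of the abelian pieces, I would fix the PBW ordering listing first a basis of $\mfu'$, then of $\mfu''$, then of $\mfa'$, then of $\mfa''$. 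The first claim to establish is that in this ordering $\langle U(\mfb)\mfb''\rangle$ is exactly the span of the PBW monomials containing at least one basis vector from $\mfu''$ or $\mfa''$; equivalently, $\nu$ is the linear map that discards every such monomial and keeps those built only from $\mfu'$ and $\mfa'$, which form a basis of $U(\mfb')$.

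Granting this, the computation of $\nu$ on $M=X_{i_1}^2\cdots X_{i_k}^2 H_{j_1}\cdots H_{j_\ell}=X^{2I}H^J$ becomes transparent. I would substitute $X_i=p(X_i)+q(X_i)$ and $H_j=r(H_j)+s(H_j)$, where $p,r$ are the orthogonal projections onto $\mfu',\mfa'$ and $q,s$ those onto $\mfu'',\mfa''$ (recall from the Remark that $\nu$ restricts to $r$ on $\mfa$). Because $\mfu$ and $\mfa$ are each abelian and all the $X$'s precede all the $H$'s in $M$, every term of the resulting expansion is already a PBW monomial in the chosen ordering, with no brackets to resolve. Discarding the monomials that contain a factor from $\mfu''$ or $\mfa''$ then yields the clean formula $\nu(M)=\nu_\mfu(X^{2I})\,\nu_\mfa(H^J)$, where $\nu_\mfu:{\rm Sym}(\mfu)\to{\rm Sym}(\mfu')$ and $\nu_\mfa:{\rm Sym}(\mfa)\to{\rm Sym}(\mfa')$ are the algebra homomorphisms induced by $p$ and $r$.

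Now evenness follows. Since $\tau$ permutes the orthonormal basis $X_0,\dots,X_7$, each $X_i$ belongs to a $\tau$-orbit whose average $p(X_i)=\tfrac12(X_i+\tau X_i)$ is a positive scalar multiple of a single basis vector $X_\rho'$ of $\mfu'$. As $\nu_\mfu$ is a homomorphism, $\nu_\mfu(X^{2I})=\nu_\mfu(X_0)^{2a_0}\cdots\nu_\mfu(X_7)^{2a_7}$ is a positive multiple of a single monomial $(X_0')^{2b_0}\cdots(X_4')^{2b_4}$, where $2b_\rho=\sum_{i:\,\rho(i)=\rho}2a_i$ is even. Hence $\nu(M)$ is a combination of monomials $(X_0')^{2b_0}\cdots(X_4')^{2b_4}\,(H'\text{-monomial})$, i.e. lies in $U(\mfb')_{\rm ev}$, the $\mfa'$-part being unconstrained. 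Extending by linearity over all spanning monomials finishes the proof.

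The main obstacle is the first claim. Because $\mfb''$ is not a subalgebra and the chosen ordering interleaves $\mfb'$ and $\mfb''$, one cannot simply peel a $\mfb''$-factor off the right of an arbitrary monomial. The point to exploit is that $\mfu''$ consists of eigenvectors for ${\rm ad}(\mfa')$: for $H'\in\mfa'$ one has $\alpha_0(H')=\alpha_7(H')$ (and likewise for the other swapped pairs), so $[H',X_0'']=\alpha_0(H')X_0''$, and an $X''$-factor can be commuted to the right past any $\mfa'$-factor while staying inside $\mfu''$. Combined with the grading relation $[\mfb'',\mfb'']\subseteq\mfb'$ (which lets me handle $\mfa''$-factors sitting at the right end) and a downward induction on degree, this moves a $\mfb''$-factor into the rightmost slot and exhibits any monomial with a $\mfb''$-factor as an element of $U(\mfb)\mfb''$; comparing with the direct sum of Lemma \ref{uofb} then identifies $\langle U(\mfb)\mfb''\rangle$ with the asserted span.
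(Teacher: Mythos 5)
Your proof is correct, but it takes a genuinely different route from the paper's. The paper argues structurally, with no PBW bookkeeping at all: it characterizes $U(\mfb)_{\rm ev}$ as the joint fixed-point set of the eight sign automorphisms $\rho_0,\ldots,\rho_7$ (each negating a single $X_i$ and fixing everything else), observes that $\rho_2$, $\rho_4$ and the products $\rho_{07},\rho_{16},\rho_{35}$ commute with $\tau$ and therefore preserve the splitting $U(\mfb)=U(\mfb')\oplus\langle U(\mfb)\mfb''\rangle$ of Lemma \ref{uofb}, hence commute with $\nu$; invariance of $\nu(\Gamma)$ under the induced sign changes of $X_0',\ldots,X_4'$ then forces $\nu(\Gamma)\in U(\mfb')_{\rm ev}$. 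You instead make $\nu$ explicit: in the interleaved ordering $\mfu',\mfu'',\mfa',\mfa''$ you identify $\langle U(\mfb)\mfb''\rangle$ with the span of the PBW monomials containing a $\mfb''$-factor, and your handling of the one genuinely delicate point is right --- since $\alpha_0=\alpha_7$, $\alpha_1=\alpha_6$, $\alpha_3=\alpha_5$ on $\mfa'$ (the same fact underlying equations (\ref{hx})), the elements $X_0-X_7$, $X_1-X_6$, $X_3-X_5$ are ${\rm ad}(\mfa')$-eigenvectors, so a $\mfu''$-factor slides to the right past $(H')^C$ while staying in $\mfu''$, and comparison of the two direct-sum decompositions with common complement $U(\mfb')$ gives the identification. (Two harmless inessentials: your appeal to $[\mfb'',\mfb'']\subseteq\mfb'$ is not needed, since in your ordering a monomial with an $\mfa''$-factor already ends in $\mfb''$; and positivity of the scalars is irrelevant --- only that $\nu_\mfu(X_i)$ is a multiple of a \emph{single} basis vector $X'_{\rho(i)}$, so its square contributes an even power.) As for what each approach buys: the paper's argument is shorter and transfers verbatim to the order-three folding of Section \ref{cinci} (using $\rho_3$, $\rho_0\rho_1\rho_5$, $\rho_2\rho_4\rho_6$), whereas yours costs the extra combinatorial lemma but yields more, namely the closed formula $\nu(X^{2I}H^J)=\nu_\mfu(X^{2I})\,\nu_\mfa(H^J)$, which for instance recovers equation (\ref{concl}) for $\nu(\Omega_1)$ without the ad hoc identities (\ref{erste})--(\ref{dritte}); it also adapts to the order-three case by replacing the average $\tfrac12(1+\tau)$ with $\tfrac13(1+\tau+\tau^2)$.
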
 

\begin{proof} Start with the observation that an element of $U(\mfb)$ is in $U(\mfb)_{\rm ev}$ if and only
if it is fixed by each of the eight algebra automorphisms  $\rho_0, \ldots, \rho_7: U(\mfb)\to U(\mfb)$,
where $\rho_i$ is the identity map on $\mfa$ and 
$$\rho_i(X_i) :=-X_i, \ \rho_i( X_j) :=X_j, \ {\rm for} \ j \neq i.$$
Observe that $\rho_2\circ \tau =\tau\circ \rho_2$, hence $\rho_2$ preserves the
 decomposition (\ref{decomp}). The same holds for $\rho_4$.  
 Consider now  $\rho_{07}:=\rho_0\circ \rho_7 =\rho_7\circ \rho_0$, which is the identity on $\mfa$ and:
 $$\rho_{07}(X_0)=-X_0, \ \rho_{07}(X_7)=-X_7, \ \rho_{07}(X_j) = X_j,  \ {\rm for} \ j \neq 0,7.$$  
 One has $\rho_{07}\circ \tau = \tau \circ \rho_{07}$, hence $\rho_{07}(U(\mfb')) = U(\mfb')$
 and $\rho_{07}(\langle U(\mfb)\mfb''\rangle) =\langle U(\mfb)\mfb''\rangle$.  
 In the same way one defines $\rho_{16}$ and $\rho_{35}$ and note that they both preserve the
 decomposition (\ref{decomp}). The result stated in the lemma follows from the fact that
 if $\Gamma\in U(\mfb)_{\rm ev}$ then $\rho_i(\Gamma)=\Gamma$, for all $0\le i \le 7$, hence
 $$\rho_2(\nu(\Gamma))=\nu(\Gamma), \ \rho_4(\nu(\Gamma)) = \nu(\Gamma), \ \rho_{07}(\nu(\gamma)) = \nu(\Gamma),\
 \rho_{16}(\nu(\Gamma)) =\nu(\Gamma), \ {\rm and} \ \rho_{35}(\nu(\Gamma)) =\nu(\Gamma).$$
 This implies that $\nu(\Gamma) \in U(\mfb')_{\rm ev}$.  
\end{proof}

Recall now that the degrees of the fundamental homogeneous generators for
    ${\rm Sym}(\mfa)^{W_{E_7}}$ and ${\rm Sym}(\mfa')^{W_{F_4}}$ are $2, 6, 8, 10, 12, 14, 18$ and $2, 6, 8, 12$ respectively,
    see the table in Section \ref{doi}. 
    
Here is the main result of this section:

\begin{proposition}\label{u1u2} (a) There exists a system $u_1,\ldots, u_7$ of fundamental generators of ${\rm Sym}(\mfa)^{W_{E_7}}$ 
of degrees $2, 6, 8, 10, 12, 14, 18$ respectively, such that $\nu(u_1), \nu(u_2), \nu(u_3),$ and  $\nu(u_5)$
are fundamental generators of ${\rm Sym}(\mfa')^{W_{F_4}}$.

(b) Let $\Omega\in U_2(\mfb)$ be the Laplacian and  $\Omega_1,\ldots, \Omega_7$  the elements of $U(\mfb)$ associated to $u_1,\ldots, u_7$ by Proposition
\ref{long}. Then $\Omega':=\nu(\Omega)$ is a multiple of the Laplacian of $\mfb'$.
Furthermore, 
$\Omega_1':=\nu(\Omega_1), \Omega_2':=\nu(\Omega_2), \Omega_3':=\nu(\Omega_3),$ and $\Omega_5':=\nu(\Omega_5)$ have symbols
equal to $ \nu(u_1), \nu(u_2), \nu(u_3),$ and $\nu(u_5)$, respectively. They all lie in $U(\mfb')_{\rm ev}$
 and commute with each other.
\end{proposition}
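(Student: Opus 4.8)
The plan is to treat the two parts separately, exploiting the fact that $\nu$ is the algebraic incarnation of the folding $\tau$ and that, by Lemma \ref{even}, it sends $U(\mfb)_{\rm ev}$ into $U(\mfb')_{\rm ev}$. For part (a) I would first analyze the map $\nu|_{{\rm Sym}(\mfa)}$ concretely: since $\nu$ restricted to $\mfa$ is the orthogonal projection onto the $\tau$-fixed subspace $\mfa'$, its extension to ${\rm Sym}(\mfa)$ is the standard ``averaging/projection'' operation, and it carries $W_{E_7}$-invariants into $W_{F_4}$-invariants (the folding identifies $W_{F_4}$ with the centralizer of $\tau$ in $W_{E_7}$ acting on $\mfa'$). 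The key point is a dimension/degree bookkeeping argument: I would show that among any set of fundamental generators of ${\rm Sym}(\mfa)^{W_{E_7}}$ in degrees $2,6,8,10,12,14,18$, the images under $\nu$ of those in degrees $2,6,8,12$ are algebraically independent $W_{F_4}$-invariants, hence (being in the right degrees $2,6,8,12$) a system of fundamental generators of ${\rm Sym}(\mfa')^{W_{F_4}}$. The degrees $2,6,8,10,12,14,18$ of $E_7$ contain the degrees $2,6,8,12$ of $F_4$ as a subset, which is exactly what makes this possible; I would then adjust the generators (replacing, e.g., $u_4$ by a suitable combination if some $\nu(u_d)$ happened to vanish or degenerate) to arrange that $\nu(u_1),\nu(u_2),\nu(u_3),\nu(u_5)$ are genuinely independent. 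The degree-$2$ case $\nu(u_1)$ is immediate from the normalization $u_1 = \sum \langle \alpha_i,\alpha_j\rangle H_iH_j$ together with \eqref{hx}.

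For part (b), I would argue as follows. The symbol map $\mu$ and the projection $\nu$ interact well because $\nu$ restricted to $\mfa$ is orthogonal projection onto $\mfa'$ and the symbol map only records the $\mfa$-part of a monomial; concretely, I expect $\mu_{\mfb'}(\nu(\Gamma)) = \nu(\mu_{\mfb}(\Gamma))$ for $\Gamma \in U(\mfb)_{\rm ev}$ (here $\mu_{\mfb'}$ is the symbol map for $\mfb'$), so that the symbol of $\nu(\Omega_i)$ is $\nu$ applied to the symbol of $\Omega_i$, namely $\nu(u_i)$, provided the degree does not drop. That $\Omega' = \nu(\Omega)$ is a multiple of the Laplacian of $\mfb'$ I would verify by a direct computation from \eqref{laplace}: applying $\nu$ to $\Omega = \sum_{i,j=1}^7 \langle\alpha_i,\alpha_j\rangle H_iH_j + \sum_{i=0}^7 X_i^2$, the $\mfa$-part projects to $\sum \langle \alpha_i,\alpha_j\rangle (\nu H_i)(\nu H_j)$ and the $X_i^2$ terms pair up under the folding, so that $\nu(X_0^2 + X_7^2) = \tfrac12(X_0'{}^2)\cdot(\text{factor})$, etc.; here the factors of $1/\sqrt2$ inserted in the definition of $X_0',X_1',X_3'$ are designed precisely so that the result comes out proportional to $\sum_{i=0}^4 (X_i')^2$ plus the correctly scaled $\mfa'$-quadratic, which is the Laplacian of $\mfb'$ up to a scalar, as promised by \eqref{concl}.

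Granting these, the remaining assertions follow quickly. Each $\Omega_i$ lies in $U(\mfb)_{\rm ev}$, so by Lemma \ref{even} each $\Omega_i' = \nu(\Omega_i)$ lies in $U(\mfb')_{\rm ev}$. To obtain the commutation relations I would show that $\nu$ is a $\tau$-equivariant map intertwining brackets \emph{on the relevant subalgebra}: since $\Omega$ and each $\Omega_i$ are $\tau$-invariant (being built from $W$-invariants and the $\tau$-symmetric Laplacian), and since $[\Omega_i,\Omega]=0$ with both operands lying in $U(\mfb')\oplus\langle U(\mfb)\mfb''\rangle$, projecting the identity $0 = \nu([\Omega_i,\Omega])$ and using that $\nu$ restricts to the identity on $U(\mfb')$ should yield $[\Omega_i',\Omega']=0$ in $U(\mfb')$; the analogous argument gives $[\Omega_i',\Omega_j']=0$. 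The main obstacle I anticipate is precisely this last step: $\nu$ is \emph{not} an algebra homomorphism (as the Remark preceding Lemma \ref{even} warns), so I cannot simply write $\nu([\Omega_i,\Omega_j]) = [\nu(\Omega_i),\nu(\Omega_j)]$. The careful point is to show that the failure of multiplicativity is confined to the ideal-like summand $\langle U(\mfb)\mfb''\rangle$, so that after projecting, the $\tau$-invariance of the $\Omega_i$'s forces the cross-terms into $\ker\nu$ and the bracket relation survives. Establishing this compatibility — essentially that $\nu$ is a homomorphism \emph{when restricted to the $\tau$-invariant subalgebra $U(\mfb)^\tau$}, which contains all the $\Omega_i$ — is the technical heart of the argument, and I would isolate it as a separate lemma before assembling the conclusion.
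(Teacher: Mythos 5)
Your outline does overlap with the paper's proof at several points: the splitting $X_0^2+X_7^2=(X_0')^2+\tfrac12(X_0-X_7)^2$ leading to (\ref{concl}), the commutative square $\mu'\circ\nu=\nu\circ\mu$ for the symbol claim, and evenness via Lemma \ref{even} are exactly the paper's steps. But the two places you yourself flag as delicate are genuinely gapped, and one of them rests on a false statement.

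For part (a), the paper does not argue abstractly: it writes down Mehta's explicit power-sum generators $v_k$, $k=2,6,8,10,12,14,18$, in the concrete model $\mfa=\bR^7$ (formula (\ref{un})), computes their restrictions for $k=2,6,8,12$ (formula (\ref{do})), and identifies these restrictions with a fundamental system for $W_{F_4}$, again by citing Mehta. Your ``degree bookkeeping plus adjust the generators if degenerate'' scheme does not replace this: matching of degrees is necessary but not sufficient, and the adjustment fallback cannot succeed if restriction is degenerate in a given degree. In degree $6$, for instance, the $W_{E_7}$-invariants form the two-dimensional space spanned by $u_1^3$ and $u_2$; replacing $u_2$ by $u_2+c\,u_1^3$ changes $\nu(u_2)$ by $c\,\nu(u_1)^3$, so either every choice of $u_2$ yields an independent restriction or none does — no adjustment can help. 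What is actually needed is that the restriction map ${\rm Sym}(\mfa)^{W_{E_7}}\to{\rm Sym}(\mfa')^{W_{F_4}}$ hits indecomposable invariants in degrees $2,6,8,12$, and your sketch contains no proof of this fact; the paper's explicit computation \emph{is} the proof.

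For part (b), the lemma you isolate as the technical heart — that $\nu$ is an algebra homomorphism on the $\tau$-invariant subalgebra $U(\mfb)^\tau$ — is false, even on $U(\mfb)_{\rm ev}\cap U(\mfb)^\tau$. Take $A\in\mfa''$ with $\alpha_0(A)=1$ (possible, since otherwise $\alpha_0$ would equal $\alpha_7$; in the paper's explicit model one can take $A=e_5$). Then $\alpha_7(A)=-1$, so with $Q:=X_0-X_7\in\mfb''$ and $P:=X_0+X_7\in\mfb'$ one has $[A,Q]=P$ and $[A,P]=Q$. The elements $v:=A^2$ and $u:=X_0^2+X_7^2=\tfrac12(P^2+Q^2)$ both lie in $U(\mfb)_{\rm ev}$ and are fixed by $\tau$, and, relative to a PBW basis adapted to $\mfb=\mfb'\oplus\mfb''$, $\nu(u)=\tfrac12P^2$ and $\nu(v)=0$. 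However, reordering gives
\[
Q^2A^2 \;=\; A^2Q^2-4PAQ-2Q^2+2P^2,
\]
while $P^2A^2$ is already in PBW order, so $\nu(uv)=\tfrac12\,\nu(Q^2A^2)=P^2\neq 0=\nu(u)\nu(v)$. So $\tau$-invariance does not confine the failure of multiplicativity to $\ker\nu$: a product of two kernel elements can have a nonzero $U(\mfb')$-component. The paper's argument is structured precisely to avoid your lemma: starting from the single identity $\Omega_i\Omega_j=\Omega_j\Omega_i$ it examines the four cross terms in (\ref{nuomega}) one at a time; the terms with $\Omega''$ as right factor lie in $\langle U(\mfb)\mfb''\rangle$ because the rightmost $\mfb''$-factor is retained, and the one genuinely problematic shape, $\Omega_j''\nu(\Omega_i)$, is disposed of not by multiplicativity but by the parity argument of Lemma \ref{abp}: each $B_k\nu(\Omega_i)$ is a $(-1)$-eigenvector of $\tau$. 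At no point does the paper multiply two kernel elements with the $\mfb''$-factors trapped in the middle, which is exactly the configuration where your lemma breaks. (The same commutator $QA-AQ=-P$ shows that the complement $\langle U(\mfb)\mfb''\rangle$ in Lemma \ref{uofb} must be handled with care and that $\nu$ is best defined via the adapted PBW basis; note also that the $\tau$-invariance of the $\Omega_i$, which your argument presupposes, is not free — it requires choosing $\tau$-invariant $u_i$ and invoking the uniqueness result of Section \ref{doi}, whereas the paper's cross-term argument needs neither.)
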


 In what follows we will prove Proposition \ref{u1u2}. We start with a concrete presentation of the roots of
 type $E_7$.
 Take $\mfa=\bR^7$; the positive roots are the following linear functions on $\mfa$ (as usual, $x_i$ are the coordinate functions):
     \begin{align*} 2x_i, \quad 1\le i \le 7;\quad & x_1\pm x_2\pm x_3\pm x_4;\\
     x_1\pm x_2\pm x_5\pm x_6;\quad & x_1\pm x_3\pm x_5\pm x_7; \\
     x_1\pm x_4\pm x_6\pm x_7;\quad &  x_2\pm x_3\pm x_6\pm x_7;\\
      x_2\pm x_4\pm x_5\pm x_7;\quad & x_3\pm x_4\pm x_5\pm x_6.
      \end{align*}
     A simple root system that fits the Dynkin diagram in Figure 1  is:
     \begin{align*}{}& \alpha_1=x_2-x_3+x_6-x_7, \ \alpha_2=x_1-x_2-x_3-x_4, \ \alpha_3=x_3-x_4+x_5-x_6,  \\
     {}&  \alpha_4=2x_4, \ \alpha_5=x_3-x_4-x_5+x_6, \ \alpha_6=x_2-x_3-x_6+x_7, \ \alpha_7=-x_1-x_2-x_5-x_6.
     \end{align*}
     The resulting highest root is $\theta= x_1+x_2-x_5-x_6$. 
     The involution $\tau$ is the linear transformation
     $$e_i \mapsto e_i, \  1\le i \le 4; \ e_j\mapsto - e_j, \ 5\le j \le 7,$$
     where $e_1,\ldots, e_7$ is the canonical basis of $\bR^7$;  
     thus $\mfa'=\bR^4$. The restricted root system is of type $F_4$ and admits the following simple roots:
     $$\alpha_2|=x_1-x_2-x_3-x_4, \  \alpha_4|= 2x_4,  \ \alpha_3|= x_3-x_4, \ \alpha_1|= x_2-x_3.$$
     
   According to \cite[p.~1096]{Me}, the following is a system of fundamental $W_{E_7}$-invariant elements of ${\rm Sym}(\mfa)$:
\begin{align}\label{un}{}& v_k:=\sum_{\pm}\left(e_1\pm e_2\pm e_7\right)^{k}+\sum_{\pm}\left(e_1\pm e_3\pm e_6\right)^{k} +\sum_{\pm}\left(e_1\pm e_4\pm e_5\right)^k +\sum_{\pm}\left( e_2\pm e_3\pm e_5\right)^k\\{}& +\sum_{\pm}\left(e_2\pm e_4\pm e_6\right)^k+\sum_{\pm}\left( e_3\pm e_4\pm e_7\right)^k+\sum_{\pm}\left( e_5\pm e_6\pm e_7\right)^k,\nonumber
  \end{align}
  where $k=2, 6, 8, 10, 12, 14, 18$ (here $\Sigma_{\pm}$ denotes the sum over all four possible choices of the two signs).  
   For $k=2, 6, 8, 12$, the projection $\nu:{\rm Sym}(\mfa) \to {\rm Sym}(\mfa')$ maps the expressions above to 
  \begin{align}\label{do}
\nu(v_k)=2\sum_{1\le i<j\le 4}(e_i-e_j)^{k}+(e_i+e_j)^{k}, \quad k=2,6,8,12,
  \end{align}  
  which are a system of fundamental $W_{F_4}$-invariant elements of ${\rm Sym}(\mfa')$, see \cite[Section 2.4]{Me}. 
  In this way we have proved point (a) of Proposition \ref{u1u2}. 
  
  We now prove point (b).  
  One has
$$\Omega=u_1+X_0^2 +\cdots + X_7^2, \quad {\rm where} \ u_1=v_2.$$ 
Observe that
\begin{align}
{} & X_0^2+ X_7^2= \frac{1}{2}(X_0+X_7)^2+\frac{1}{2}(X_0-X_7)^2=(X_0')^2+\frac{1}{2}(X_0-X_7)^2\label{erste}\\
{} & X_1^2+ X_6^2= \frac{1}{2}(X_1+X_6)^2+\frac{1}{2}(X_1-X_6)^2=(X_1')^2+\frac{1}{2}(X_1-X_6)^2\\
{} & X_3^2+ X_5^2= \frac{1}{2}(X_3+X_5)^2+\frac{1}{2}(X_3-X_5)^2=(X_3')^2+\frac{1}{2}(X_3-X_5)^2.\label{dritte}
\end{align}
Since $X_0-X_7, X_1-X_6$, and $X_3-X_5$ are in $\mfb''$, we deduce that  
\begin{equation}\label{concl}\nu(\Omega_1) =\nu(u_1) + (X_0')^2 + (X_1')^2 + (X_2')^2 + (X_3')^2+ (X_4')^2.\end{equation}
 By equation (\ref{laplace}), this  is a scalar multiple of the Laplacian of $\mfb'$
 (the reason is  that $\nu(u_1)$ is a scalar multiple of $\sum_{i,j=1}^4 \langle \alpha_i|,\alpha_j|\rangle H'_i H'_j$,
 where $H'_1,\ldots, H'_4\in \mfa'$ with $\alpha_i(H'_j) =\delta_{ij}$, $1\le i,j \le 4$;
 one may need to rescale the metric on $\mfu'$ such that $\{X_0', X_1', X_2'$, $X_3'$, $X_4'\}$  is an orthonormal basis). 
  
 Let $\tau : U(\mfb) \to U(\mfb)$ be the  algebra automorphism induced  by Lemma \ref{autom}.

\begin{lemma} \label{abp}
(a) If $\Gamma\in U(\mfb')$ then $\tau(\Gamma)=\Gamma$.

(b) If $\Gamma\in U(\mfb)$ has the property that $\tau(\Gamma)=-\Gamma$, then
$\Gamma\in \langle U(\mfb)\mfb''\rangle$.
\end{lemma}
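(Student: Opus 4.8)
The plan is to deduce both parts from the direct-sum decomposition $U(\mfb) = U(\mfb')\oplus \langle U(\mfb)\mfb''\rangle$ of Lemma \ref{uofb}, together with the fact that $\tau$ is an algebra automorphism whose $\pm 1$-eigenspaces on $\mfb$ are exactly $\mfb'$ and $\mfb''$.

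For part (a), I would use that $U(\mfb')$ is generated as an associative algebra by $\mfb'$. Writing an arbitrary $\Gamma\in U(\mfb')$ as a linear combination of PBW monomials $Y_{i_1}\cdots Y_{i_k}$ with each $Y_{i_j}\in \mfb'$, and invoking that $\tau$ is multiplicative and fixes every element of $\mfb'$ (being the $+1$-eigenspace), one obtains $\tau(Y_{i_1}\cdots Y_{i_k}) = \tau(Y_{i_1})\cdots \tau(Y_{i_k}) = Y_{i_1}\cdots Y_{i_k}$. Summing over the monomials gives $\tau(\Gamma)=\Gamma$. This part is immediate.

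For part (b), the first step is to check that $\tau$ preserves the second summand $\langle U(\mfb)\mfb''\rangle$. Indeed, for $a\in U(\mfb)$ and $z\in \mfb''$ one has $\tau(a z)=\tau(a)\tau(z)=-\tau(a)\,z\in U(\mfb)\mfb''$, so $\tau\bigl(\langle U(\mfb)\mfb''\rangle\bigr)\subseteq \langle U(\mfb)\mfb''\rangle$. Combined with part (a), which says that $\tau$ acts as the identity on the first summand $U(\mfb')$, this shows that $\tau$ respects the decomposition (\ref{decomp}); equivalently, the projection $\nu$ onto $U(\mfb')$ satisfies $\nu\circ\tau=\nu$, since the $U(\mfb')$-component of $\tau(\Gamma)$ agrees with that of $\Gamma$.

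The conclusion then follows by comparing components. Given $\Gamma$ with $\tau(\Gamma)=-\Gamma$, write $\Gamma=\Gamma'+\Gamma''$ with $\Gamma'=\nu(\Gamma)\in U(\mfb')$ and $\Gamma''\in \langle U(\mfb)\mfb''\rangle$. Applying $\nu$ to $\tau(\Gamma)=-\Gamma$ and using $\nu\circ\tau=\nu$ yields $\Gamma'=\nu(\Gamma)=\nu(\tau(\Gamma))=\nu(-\Gamma)=-\Gamma'$, whence $2\Gamma'=0$ and $\Gamma'=0$ (we work over $\bR$). Therefore $\Gamma=\Gamma''\in \langle U(\mfb)\mfb''\rangle$, as desired. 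The argument is essentially formal; the only point requiring attention is the verification that $\tau$ stabilizes $\langle U(\mfb)\mfb''\rangle$, as this is what permits the two summands of the decomposition to be handled independently—and it is precisely here that the multiplicativity of $\tau$ together with $\tau|_{\mfb''}=-{\rm id}$ enters.
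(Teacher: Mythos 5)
Your proposal is correct and follows essentially the same route as the paper: decompose $\Gamma=\Gamma'+\Gamma''$ via Lemma \ref{uofb}, use part (a) together with the fact that $\tau$ stabilizes $\langle U(\mfb)\mfb''\rangle$, and compare $U(\mfb')$-components to force $\Gamma'=0$. The only difference is cosmetic: you spell out the verification that $\tau(az)=-\tau(a)z$ preserves the second summand, which the paper asserts implicitly.
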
  
  
\begin{proof} Only (b) needs to be justified.  By Lemma \ref{uofb}, one can write $\Gamma=\Gamma'+\Gamma''$, where $\Gamma'\in U(\mfb')$ and
$\Gamma''\in \langle U(\mfb)\mfb''\rangle$. This implies that $\tau(\Gamma)=\Gamma'+\tau(\Gamma'')$.
Since $\tau(\Gamma'')\in \langle U(\mfb)\mfb''\rangle$, one has $\Gamma'=0$.
\end{proof}

  It now comes the last step in the proof of Proposition \ref{u1u2}.
  
  \begin{lemma} (a) For any $i\in \{1,2,3,5\}$, the symbol of $\nu(\Omega_i)$ in $U(\mfb')$ is $\nu(u_i)$.
  
 (b) For any two $i,j\in \{1, 2,3,5\}$ one has $[\nu(\Omega_i), \nu(\Omega_j)]=0$.
  \end{lemma}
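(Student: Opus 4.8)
My plan is to prove (a) by intertwining the symbol map with $\nu$, and (b) by first upgrading each $\Omega_i$ to a $\tau$-invariant element and then pushing the relation $[\Omega_i,\Omega_j]=0$ through $\nu$.

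\emph{Part (a).} Grade $U(\mfb)$ by declaring $\mfa$ to have degree $0$ and $\mfu$ degree $1$; since $[\mfa,\mfu]\subseteq\mfu$ and $[\mfu,\mfu]=0$ this is an honest algebra grading, the symbol map $\mu$ is the projection onto the degree-$0$ component, and the grading restricts to $\mfb'=\mfa'\oplus\mfu'$ and to $\mfb''$. Hence $U(\mfb')$ and $\langle U(\mfb)\mfb''\rangle$ are graded subspaces, so $\nu$ preserves the grading. Writing $\mu'$ for the symbol map of $\mfb'$ (the degree-$0$ projection on $U(\mfb')$), this gives $\mu'(\nu(\Gamma))=\nu(\mu(\Gamma))$ for all $\Gamma$, because on $\mathrm{Sym}(\mfa)=U(\mfa)$ the map $\nu$ is just restriction of polynomials to $\mfa'$ (commutativity of $\mfa$ means no correction terms arise). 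Taking $\Gamma=\Omega_i$ and using $\mu(\Omega_i)=u_i$ yields $\mu'(\nu(\Omega_i))=\nu(u_i)$, which by Proposition \ref{u1u2}(a) is a fundamental generator of $\mathrm{Sym}(\mfa')^{W_{F_4}}$; this is exactly the claim of (a), and it shows $\deg\nu(\Omega_i)=\deg u_i$.

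\emph{Part (b), setup.} First I would prove $\tau(\Omega_i)=\Omega_i$. The map $\tau$ permutes the automorphisms $\rho_0,\dots,\rho_7$ of Lemma \ref{even}, hence preserves $U(\mfb)_{\rm ev}$; it is an isometry, so it fixes the Laplacian $\Omega$; it preserves degrees; and $\mu(\tau(\Omega_i))=\tau(u_i)=u_i$ since the generators $u_i=v_k$ of \eqref{un} have even $k$ and are therefore $\tau$-invariant. By the uniqueness Corollary of Section \ref{doi}, $\tau(\Omega_i)=\Omega_i$; consequently $R_i:=\Omega_i-\nu(\Omega_i)\in\langle U(\mfb)\mfb''\rangle$ is also $\tau$-invariant by Lemma \ref{abp}(a). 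A short PBW computation with the grading above shows that any product in which one factor lies in $U(\mfb')$ acquires no degree-$0$ ($|J|=0$) contribution under $\nu$, so expanding $0=\nu([\Omega_i,\Omega_j])$ (the bracket vanishes by Proposition \ref{long}) collapses to
\[
[\nu(\Omega_i),\nu(\Omega_j)]=-\nu([R_i,R_j]).
\]

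\emph{Part (b), conclusion and the main obstacle.} The left-hand side lies in the subalgebra $U(\mfb')_{\rm ev}$ (Lemma \ref{even}) and has vanishing symbol, since $\mu'$ is an algebra homomorphism into the commutative ring $\mathrm{Sym}(\mfa')$ and the symbols $\mu'(\nu(\Omega_i))$, $\mu'(\nu(\Omega_j))$ commute. The plan is then to invoke Proposition \ref{degree} for $\mfb'$: any even element commuting with the Laplacian $\Omega'=\nu(\Omega)/c$ is a multiple of $(X_0'(X_1')^{m_1}\cdots(X_4')^{m_4})^2$, of degree $2+2\sum m_i=18$ in the $F_4$-short case. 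Since $\deg[\nu(\Omega_i),\nu(\Omega_j)]\le\deg u_i+\deg u_j-1$, this forces the bracket to be $0$ for every pair of degrees summing to at most $18$. The genuine difficulties are therefore twofold, and I expect them to be the main obstacle. First, to apply Proposition \ref{degree} one must know $[\nu(\Omega_i),\nu(\Omega_j)]$ commutes with $\Omega'$; the cases $j=1$ are precisely the assertion that $\nu(\Omega_i)$ commutes with $\Omega'$, and these cannot be obtained from the degree count without circularity — I would instead prove them as a radial-part identity, namely $\nu([R_i,R_1])=0$ directly, and then propagate to all pairs by the Jacobi identity. Second, the single boundary pair of degrees $(8,12)$, i.e.\ $(i,j)=(3,5)$, where the bound gives only $19$, must be handled by a finer analysis of the $\mfu$-graded leading terms: because $\mfu'$ is abelian the top-degree ($\mfu$-degree $\ge 18$) parts Poisson-commute, which forces the coefficient of the degree-$18$ monomial to vanish.
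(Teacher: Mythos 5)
Your part (a) is correct and is in substance the paper's own argument: both come down to the commutativity $\mu'\circ\nu=\nu\circ\mu$, which you obtain from the $\mfu$-degree grading (symbol maps as degree-zero projections, with $U(\mfb')$ and $\langle U(\mfb)\mfb''\rangle$ graded subspaces) while the paper verifies it directly by decomposing $\Gamma=\Gamma'+\sum_k a_kB_k$ and using that $\nu\circ\mu$ kills $\mfb''$. These are the same computation in two dialects.

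Part (b) has a genuine gap, and it sits exactly where the paper does its real work. Write $\Omega_i=\nu(\Omega_i)+R_i$ with $R_i\in\langle U(\mfb)\mfb''\rangle$. Since $\langle U(\mfb)\mfb''\rangle$ is by construction a \emph{left} ideal (anything times $\sum_k a_kB_k$ stays in it), the terms $\nu(\Omega_i)R_j$, $\nu(\Omega_j)R_i$, $R_iR_j$, $R_jR_i$ are all killed by $\nu$ for trivial reasons; in particular $\nu([R_i,R_j])=0$ automatically, so your collapsed identity $[\nu(\Omega_i),\nu(\Omega_j)]=-\nu([R_i,R_j])$ has the bookkeeping backwards: the term you retain as the obstruction vanishes trivially, while the terms you discard, namely $R_j\nu(\Omega_i)$ and $R_i\nu(\Omega_j)$, are precisely the hard ones (right multiplication of the left ideal by $U(\mfb')$). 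Your justification for discarding them, ``a short PBW computation with the grading above,'' cannot work: the $\mfu$-degree grading is blind to the left/right asymmetry. What is needed is a $\tau$-eigenvalue argument: writing $R_j=\sum_k a_kB_k$ with $B_k\in\mfb''$, one has $\tau(B_k\nu(\Omega_i))=-B_k\nu(\Omega_i)$ by Lemma \ref{abp}(a), hence $B_k\nu(\Omega_i)\in\langle U(\mfb)\mfb''\rangle$ by Lemma \ref{abp}(b), and then $R_j\nu(\Omega_i)=\sum_k a_k\bigl(B_k\nu(\Omega_i)\bigr)$ lies in the left ideal. This is exactly the paper's proof, and once it is in place the lemma follows at once by comparing $U(\mfb')$-components in the direct sum of Lemma \ref{uofb} --- no degree count, no special cases. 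Consequently your entire fallback apparatus is both unnecessary and, as you half-notice yourself, insufficient: applying Proposition \ref{degree} inside $\mfb'$ presupposes that $[\nu(\Omega_i),\nu(\Omega_j)]$ commutes with $\Omega'$ (circular for $j=1$), and for $(i,j)=(3,5)$ the bound $\deg u_3+\deg u_5-1=19$ exceeds the threshold $2+2\sum m_i=18$, so the degree argument provably fails for that pair --- this is precisely why the appendix-style degree count (Proposition \ref{long}) does not transplant to the $F_4$-short case and why the paper argues via the projection instead. Your proposed patch (top-degree Poisson-commutativity because $\mfu'$ is abelian) controls only the degree-$19$ symbol, which vanishes anyway, and says nothing about the degree-$18$ coefficient that Proposition \ref{degree} permits; it is not a proof. (Your Step 1, proving $\tau(\Omega_i)=\Omega_i$ via the uniqueness corollary, is correct but superfluous: the argument only needs $\tau$-invariance of $\nu(\Omega_i)$, which is Lemma \ref{abp}(a).)
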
  
  
  \begin{proof} 
(a) Let $\mu': U(\mfb')\to U(\mfa')$ be the symbol map. It is sufficient to show that the following diagram is commutative.

\[
\xymatrix{
U(\mfb) \ar[r]^{\mu} \ar[d]^{\nu} & U(\mfa) \ar[d]^{\nu} \\
U(\mfb') \ar[r]^{\mu'} & U(\mfa')
}
\]

Take $\Gamma \in U(\mfb)$ and decompose it as $\Gamma = \Gamma' +\sum_{k=1}^p a_k B_k$ where 
$\Gamma' \in U(\mfb')$, $a_k \in U(\mfb)$, and $B_k \in \mfb''$. 
Both $\nu : \mfa\to \mfa'$ and $\mu : \mfb \to \mfa$ are Lie algebra homomorphisms,
hence $\nu \circ \mu (\sum_{k=1}^p a_k B_k) = \sum_{k=1}^p \nu(\mu(a_k)) \nu(\mu(B_k)) =0$,
since $\nu(\mu(B_k))=0$ (recall that
$\mfb''=\mfa''\oplus {\rm Span}\{X_0-X_7, X_1-X_6, X_3-X_5\}$).   It remains to show that $\nu\circ \mu (\Gamma') = \mu' (\Gamma')$. 
But this follows readily from the fact that $\nu \circ \mu|_{\mfb'} : \mfb'\to \mfa'$ is equal to
$\mu'$, the map being a Lie algebra homomorphism. 

(b) Decompose
\begin{align*}
{}& \Omega_i = \nu(\Omega_i)+\Omega_i''\\
{}& \Omega_j = \nu(\Omega_j)+\Omega_j'',
\end{align*}
where  $\Omega_i'', \Omega_j'' \in \langle U(\mfb)\mfb''\rangle$. From $\Omega_i \Omega_j = \Omega_j \Omega_i$ 
one obtains:
\begin{align}\label{nuomega}{}& \ \ \ \ \nu(\Omega_i)\nu(\Omega_j) + \nu(\Omega_i)\Omega_j''+\Omega_i''\nu(\Omega_j) +\Omega_i''\Omega_j''\\
{}& =\nu(\Omega_j)\nu(\Omega_i) + \nu(\Omega_j)\Omega_i''+\Omega_j''\nu(\Omega_i) +\Omega_j''\Omega_i''.\nonumber\end{align}  
We claim that except  $\nu(\Omega_i)\nu(\Omega_j)$ and $\nu(\Omega_j)\nu(\Omega_i)$, all terms in both sides 
of the equation above are in $\langle U(\mfb)\mfb''\rangle$. The claim is clearly true for
$\nu(\Omega_i)\Omega_j'', \nu(\Omega_j)\Omega_i'', \Omega_i''\Omega_j''$, and $\Omega_j''\Omega_i''$. 
To show that $\Omega_j''\nu(\Omega_i)$ is in $\langle U(\mfb)\mfb''\rangle$, we note that
$\Omega_j'' = \sum_{k=1}^p a_k B_k$ for some  $a_k \in U(\mfb)$ and $B_k \in \mfb''$.
Hence $\Omega_j'' \nu(\Omega_i) = \sum_{k=1}^p a_k B_k \nu(\Omega_i)$.  
But by Lemma \ref{abp} (a), for any $1\le k \le p$ we have $\tau(B_k\nu(\Omega_i))=\tau(B_k)\tau(\nu(\Omega_i)) = -B_k \nu(\Omega_i)$;
 hence, by Lemma \ref{abp} (b), the product $B_k \nu(\Omega_i)$ is in $\langle U(\mfb)\mfb''\rangle$. 
By multiplying each such product from the left by $a_k$ and adding up all these expressions ($1\le k \le p$),
the result will be in  $\langle U(\mfb)\mfb''\rangle$. 
  \end{proof}

 As a consequence of Proposition \ref{u1u2}, we have:
 
 \begin{corollary}\label{cor:f4}
{ The conclusion of Theorem \ref{thm:main} is} true in the case when $\Pi$ is of type $F_4$ and $\beta$ is the short dominant root. 
\end{corollary}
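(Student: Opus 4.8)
The plan is to obtain Corollary \ref{cor:f4} by transcribing Proposition \ref{u1u2} into the language of Theorem \ref{thm:main}, applied to the algebra $\mfb'$. Recall that $\mfb'$ is exactly the ``$ax+b$'' algebra attached to $F_4$ extended by its short dominant root (the diagram $E_6^{(2)}$), as established around equation (\ref{hx}); so proving Theorem \ref{thm:main} for $\mfb'$ is precisely the assertion of the Corollary. Proposition \ref{u1u2} has already produced four pairwise commuting elements $\Omega_1', \Omega_2', \Omega_3', \Omega_5' \in U(\mfb')_{\eve}$ whose symbols $\nu(u_1), \nu(u_2), \nu(u_3), \nu(u_5)$ are fundamental $W_{F_4}$-invariant generators, homogeneous of degrees $2, 6, 8, 12$ by equation (\ref{do}). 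Since these are exactly the fundamental degrees of $F_4$, I would declare $\nu(u_1), \nu(u_2), \nu(u_3), \nu(u_5)$ to be the chosen generators of ${\rm Sym}(\mfa')^{W_{F_4}}$, rescaling $\nu(u_1)$ so as to match the convention $u_1' = \sum_{i,j=1}^4 \langle \alpha_i|, \alpha_j|\rangle H_i' H_j'$ (legitimate, since $\nu(u_1)$ is already a scalar multiple of it).

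With this relabeling I would verify the three bullets of Theorem \ref{thm:main} for each index $i \in \{1, 2, 3, 5\}$. The symbol condition is immediate from Proposition \ref{u1u2}. For commutation with the Laplacian, note that $\Omega_1' = \nu(\Omega_1) = \nu(\Omega) = \Omega'$ is a nonzero multiple of the Laplacian of $\mfb'$ by equation (\ref{concl}); since the four elements commute pairwise, $[\Omega_i', \Omega_1'] = 0$ is the same as commuting with the Laplacian. The degree condition is the one point needing a short argument: on one hand the symbol $\nu(u_i)$ is nonzero and homogeneous of degree $\deg u_i$, forcing $\deg \Omega_i' \ge \deg u_i$; on the other hand $\Omega_i' = \nu(\Omega_i)$ retains only those PBW monomials of $\Omega_i$ that already lie in $U(\mfb')$ (cf.\ the proof of Lemma \ref{uofb}), so $\nu$ cannot raise filtration degree, and since $\deg \Omega_i = \deg u_i$ by Proposition \ref{long} we get $\deg \Omega_i' \le \deg u_i$. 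Hence $\deg \Omega_i' = \deg u_i = \deg \nu(u_i)$, as required; pairwise commutativity is again Proposition \ref{u1u2}.

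It remains to invoke uniqueness, which follows from the Corollary of Section \ref{doi} applied to $\mfb'$: for $F_4$ with $\beta$ short the table gives fundamental degrees $2, 6, 8, 12$ and $\sum_i m_i = 8$, so every $\deg u_i \le 12 \le 16 = 2\sum_i m_i$, which is exactly the numerical hypothesis under which Proposition \ref{degree} forces at most one such operator in each degree. Assembling the existence just verified with this uniqueness yields Theorem \ref{thm:main} for $\mfb'$, that is, Corollary \ref{cor:f4}. I expect the only genuinely delicate step to be the degree bookkeeping: one must be sure both that $\nu$ does not increase filtration degree and that the filtration degree of $\Omega_i'$ does not drop below $\deg u_i$ — the latter relying crucially on $\nu(u_i)$ being a genuine degree-$\deg u_i$ generator rather than an element of lower degree. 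Everything else is a direct reading of Proposition \ref{u1u2}.
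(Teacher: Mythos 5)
Your reconstruction of the ``special choice'' case is sound and in places more explicit than the paper: the verification that $\nu$ does not raise filtration degree (via the PBW ordering with the $\mfb''$-factors last, as in Lemma \ref{uofb}), combined with the fact that the symbol $\nu(u_i)$ is a genuine homogeneous generator of degree $\deg u_i$, correctly pins down $\deg \Omega_i' = \deg u_i$, and your numerical check $\deg u_i \le 12 \le 16 = 2\sum_i m_i$ is exactly the hypothesis under which the uniqueness corollary of Section \ref{doi} applies to $\mfb'$.

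However, there is a genuine gap, and it is precisely the step that constitutes the paper's entire proof of Corollary \ref{cor:f4}. Theorem \ref{thm:main} is stated for an \emph{arbitrarily chosen} system of fundamental homogeneous generators (only $u_1$ is pinned down by convention), whereas you ``declare'' the particular elements $\nu(u_1), \nu(u_2), \nu(u_3), \nu(u_5)$ to be the chosen generators; this proves the conclusion only for one special system, and uniqueness cannot repair this, since it constrains operators for a \emph{fixed} system of generators but manufactures nothing for a different system. The paper closes exactly this gap: given an arbitrary system $u_1', u_2', u_3', u_4' \in {\rm Sym}(\mfa')^{W_{F_4}}$, one writes each $u_j' = f_j\bigl(\nu(u_1), \nu(u_2), \nu(u_3), \nu(u_5)\bigr)$ for some polynomial $f_j$ in four variables (possible because the $\nu(u_i)$ are fundamental generators) and sets $\Omega_j' := f_j\bigl(\nu(\Omega_1), \nu(\Omega_2), \nu(\Omega_3), \nu(\Omega_5)\bigr)$, which is well defined because the $\nu(\Omega_i)$ commute pairwise; since the symbol map is an algebra homomorphism, $\Omega_j'$ has symbol $u_j'$, and membership in $U(\mfb')_{\eve}$, commutativity, degree, and uniqueness then follow as in your argument. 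So your proposal should be read as a correct proof of a weaker statement (the theorem for one distinguished generator system), missing the change-of-generators construction that the paper's proof supplies; once that two-line substitution is added, the two arguments coincide.
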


 \begin{proof} Proposition \ref{u1u2} is telling us that { the conclusion of Theorem \ref{thm:main} is} true for a special choice of the
 fundamental $W_{F_4}$-invariant homogeneous polynomials, namely $\nu(u_1), \nu(u_2), \nu(u_3)$, and $\nu(u_5)$. 
 Let now $u_1', u_2', u_3', u_4'\in S(\mfa')^{W_{F_4}}$ be an arbitrary system of such polynomials. 
 For each $j=1,2,3,4$, one can write $u_j'$ as $f_j(\nu(u_1), \nu(u_2), \nu(u_3),\nu(u_5))$,
 where $f_j$ is a polynomial in four variables. Then $\Omega'_j:=f_j(\nu(\Omega_1), \nu(\Omega_2), \nu(\Omega_3),
 \nu(\Omega_5))$, where $1\le j \le 4$,  satisfy the conditions in Proposition \ref{u1u2} relative to
 $u_1', u_2', u_3', u_4'$.  
 \end{proof}  
  
   \section{The case when $\Pi$ is  of type  $G_2$ and $\beta$ is short}\label{cinci}
     
   In the case mentioned in the title, we will use the same approach as in the previous section, this time for the folding of the  extended Dynkin diagram $E_6^{(1)}$
     onto the extended twisted Dynkin diagram  $D_4^{(3)}$.
     This is described in Figure \ref{F4} below, see \cite[Section 3.4]{BST}.
    \begin{figure}[h]
\begin{center}
\epsfig{figure=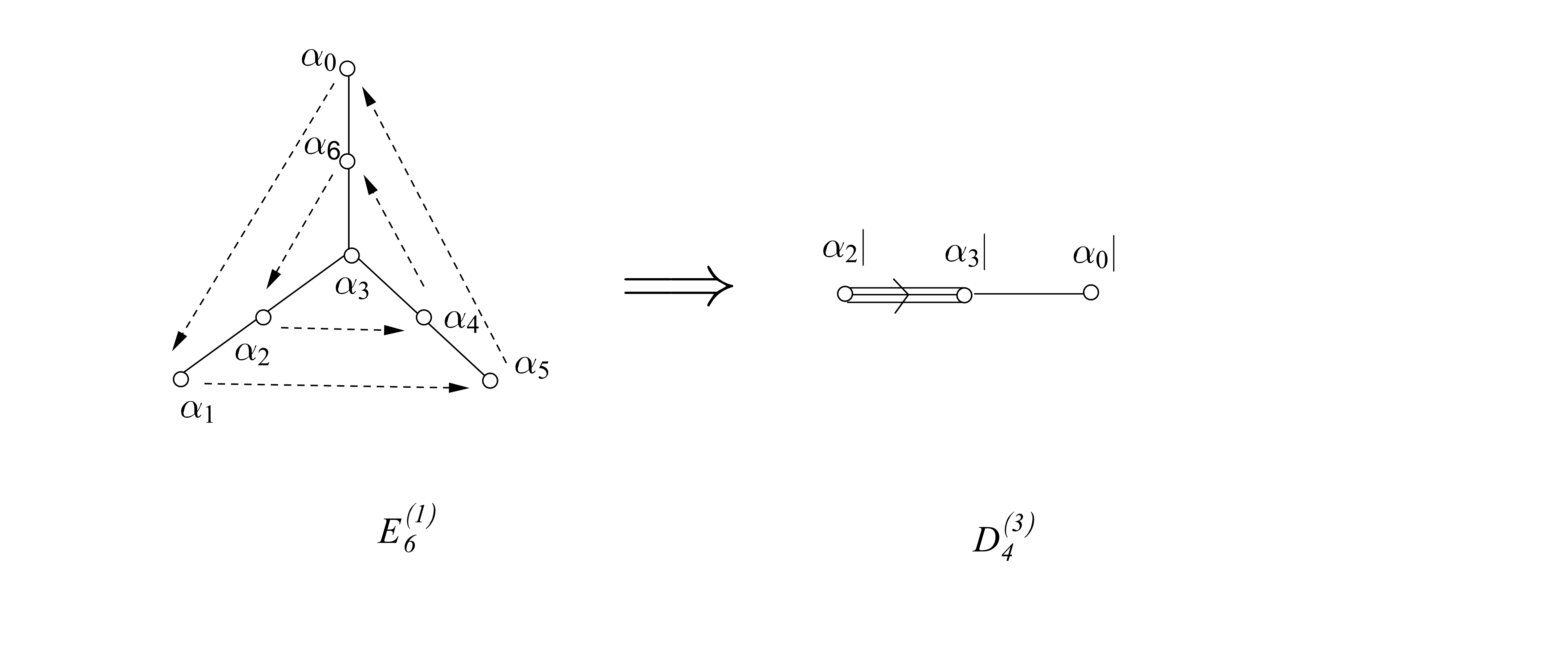,width=5in}
\end{center}
\begin{center}
\parbox{12in}{
\caption[]{The folding $E_6^{(1)}\to D_4^{(3)}.$ }\label{F4} }
\end{center}
\end{figure} 
   \newline \noindent Here $\{\alpha_1, \ldots, \alpha_6\}\subset \mfa^*$ are simple roots of the root system $\Pi$ of type $E_6$,
 and $\alpha_0=-\theta$ is the negative of the highest root. One has
 $$\theta=\alpha_1+2\alpha_2+3\alpha_3+2\alpha_4+\alpha_5+2\alpha_6.$$
 Let $\tau$ be the automorphism of $\mfa^*$ determined by
 $$\tau(\alpha_1)= \alpha_5, \ \tau(\alpha_2) = \alpha_4, \ \tau(\alpha_3) = \alpha_3, \ \tau(\alpha_4) = \alpha_6, \
 \tau(\alpha_5) = \alpha_0, \ \tau(\alpha_6) = \alpha_2.$$
 One deduces that $\tau(\alpha_0) = \alpha_1$, hence $\tau$ is a diagram automorphism of order three, that is, $\tau^3={\rm id}$. 
 The canonical isomorphism $\mfa^*\simeq \mfa$ induces an automorphism $\tau$ of $\mfa$.  
 Let $\mfa'\subset \mfa$ be the fixed point set of $\tau$. For $\lambda \in \mfa^*$, we denote by $\lambda|$ the restriction to $\mfa'$.
  Then  $\Pi|:=\{\alpha| \ ;  \ \alpha\in \Pi\}$  is a root system of type $G_2$. It admits $\{\alpha_2|, \alpha_3|\}$ as simple roots.
One has $\theta|= 2\alpha_2| + \alpha_3|$, which is the short dominant root of $\Pi|$ (see \cite[p.~372]{Go-Wa1}).

 The  ``$ax+b$" algebra of the extended Dynkin diagram $E_6^{(1)}$ is
 $\mfb = \mfa\oplus {\rm Span}_\bR\{X_0, \ldots, X_6\}$ where
   \begin{align*}{}& [H, X_i]=\alpha_i (H) X_i, \ {\rm for \ all \ } 0\le i \le 6 \ {\rm and  \ all \ } H \in \mfa.\end{align*}
Let $\tau: \mfb\to \mfb$ be the linear automorphism which extends the previous $\tau$  such that
 \begin{align*}{}& \tau(X_0) = X_1, \ \tau(X_1)= X_5, \ \tau(X_2) = X_4, \ \tau(X_3) = X_3, \\ {}& \tau(X_4) = X_6, \
 \tau(X_5) = X_0, \ \tau(X_6) = X_2.\end{align*}
 One shows that $\tau$ is a Lie algebra automorphism, by proceeding in the same way as in Lemma \ref{autom}. 
 
 Let $\mfb'\subset \mfb$ be the fixed point set of $\tau$. Observe that $X_0+X_1+X_5, X_2+X_4+X_6,$ and $X_3$ are in $\mfb'$ 
 and that for any $H'\in \mfa'$, one has
 \begin{align} {}& [H', X_0+X_1+X_5]=\alpha_0(H')(X_0+X_1+X_5),\nonumber\\
 {}& [H', X_2+X_4+X_6]=\alpha_2(H')(X_2+X_4+X_6),\label{hxy}\\
 {}& [H', X_3]=\alpha_3(H')X_3.\nonumber
 \end{align}
 That is, $\mfb'=\mfa'\oplus \mfu'$, where $\mfu'$ has a basis consisting of
 $$X'_0:=\frac{1}{\sqrt{3}}(X_0+X_1+X_5), X'_2:=\frac{1}{\sqrt{3}}(X_2+X_4+X_6), X_3':=X_3.$$
 By (\ref{hxy})  $\mfb'$ is  isomorphic to the ``$ax+b$" algebra that corresponds to the Dynkin diagram $D_4^{(3)}$. 
 
 Let $\mfb_\bC$ and $\mfb'_\bC$ be the complexifications of $\mfb$ and $\mfb'$. Extend $\tau$ by $\bC$-linearity to an automorphism of $\mfb_\bC$. 
 Set $\ve := \cos (\pi/3) + i \sin (\pi/3)$. The eigenspace decomposition of $\tau$ is $\mfb_\bC=\mfb_\bC'\oplus \mfb''$, where
 $$\mfb''=\{Z\in \mfb_\bC \mid \tau(Z)=\ve Z\}\oplus \{Z\in \mfb_\bC \mid \tau(Z)=\ve^2Z\}.$$ 
 The following result can be proved with the same method as Lemma \ref{uofb}. 
 
 \begin{lemma}
One has \begin{equation}\label{ubprim} U(\mfb_\bC) = U(\mfb_\bC')\oplus \langle U(\mfb_\bC)\mfb''\rangle,\end{equation}
where $\langle U(\mfb_\bC)\mfb''\rangle$ is the $\bC$-linear span of $U(\mfb_\bC)\mfb''$.
 \end{lemma}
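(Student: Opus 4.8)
The plan is to mimic the proof of Lemma \ref{uofb} step by step, the only new feature being that $\tau$ has order three rather than two, so we work over $\bC$ and the eigenspace $\mfb''$ collects the two nontrivial eigenspaces (eigenvalues $\ve$ and $\ve^2$). First I would pick a basis $Y_1,\ldots,Y_s$ of $\mfb_\bC'$ together with a basis $Z_1,\ldots,Z_t$ of $\mfb''$ that respects the eigenspace splitting $\mfb''=\mfb''_{\ve}\oplus \mfb''_{\ve^2}$; since $\tau$ is a linear automorphism of the finite-dimensional space $\mfb_\bC$ satisfying $\tau^3=\mathrm{id}$, it is diagonalizable and these two bases together form a basis of $\mfb_\bC$. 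The associated PBW basis of $U(\mfb_\bC)$ then consists of the ordered monomials $Y^IZ^J$, and the PBW basis of $U(\mfb_\bC')$ is exactly the sub-collection $\{Y^I\}$. This immediately yields the sum decomposition $U(\mfb_\bC)=U(\mfb_\bC')+\langle U(\mfb_\bC)\mfb''\rangle$, because any monomial $Y^IZ^J$ with $J\neq 0$ can be rewritten, using the PBW straightening relations, as an element of $\langle U(\mfb_\bC)\mfb''\rangle$ (one simply peels off a trailing $Z_k$).

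Next I would verify that the sum is direct, which is the only part requiring a genuine argument. Following the proof of Lemma \ref{uofb} verbatim, the key observation is the degree/leading-term estimate: for the ordered monomial $Y^IZ^J$ and any index $k$, the product $Y^I Z^J Z_k$ equals $Y^I Z_1^{j_1}\cdots Z_k^{j_k+1}\cdots Z_t^{j_t}$ plus terms of strictly smaller degree (the correction terms coming from the commutators in the Lie bracket, which lower the total degree). Consequently, if $\Gamma\in\langle U(\mfb_\bC)\mfb''\rangle$ has degree $m$, then in its PBW expansion every degree-$m$ monomial is of the form $Y^IZ^J$ with at least one $j_k$ nonzero. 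Since no such monomial appears in the PBW expansion of an element of $U(\mfb_\bC')$, we conclude $\langle U(\mfb_\bC)\mfb''\rangle\cap U(\mfb_\bC')=0$, establishing \eqref{ubprim}.

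The proof is essentially identical to that of Lemma \ref{uofb}, and indeed the excerpt already signals this by saying the result ``can be proved with the same method.'' The one point that deserves a moment's care — and the only place the order-three nature of $\tau$ enters — is confirming that $\mfb''$ is genuinely $\tau$-stable and complements $\mfb_\bC'$, i.e.\ that the eigenspace decomposition $\mfb_\bC=\mfb_\bC'\oplus\mfb''$ (with $\mfb''=\mfb''_{\ve}\oplus\mfb''_{\ve^2}$) is correct. This follows from diagonalizability of $\tau$, with $\mfb_\bC'$ being the $\ve^0=1$ eigenspace. After that, the straightening argument is insensitive to whether the eigenvalues are $\pm1$ or cube roots of unity: all that matters is that $\mfb''$ is spanned by eigenvectors disjoint from the fixed space, so that the PBW bases split cleanly. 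I do not anticipate a real obstacle; the main thing to get right is that the leading-term lemma used to prove directness is a purely PBW-combinatorial statement that transfers unchanged to the complexified, order-three setting.
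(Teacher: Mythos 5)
Your plan is, as you yourself anticipate, exactly the paper's: the paper dismisses this lemma in one line (``can be proved with the same method as Lemma \ref{uofb}''), and your order-three adjustments --- diagonalizability of $\tau$ over $\bC$ since $\tau^3=\mathrm{id}$, a basis of $\mfb''$ adapted to the splitting into the $\ve$- and $\ve^2$-eigenspaces, and the observation that the PBW straightening argument is insensitive to the eigenvalues --- are the right ones. The spanning statement $U(\mfb_\bC)=U(\mfb_\bC')+\langle U(\mfb_\bC)\mfb''\rangle$ is correct and correctly proved: an ordered monomial $Y^IZ^J$ with $J\neq 0$ literally ends in some $Z_k$, so it lies in $U(\mfb_\bC)\mfb''$.

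However, the directness step --- which you rightly single out as ``the only part requiring a genuine argument'' and then take over verbatim --- has a real gap, inherited from the paper's own proof of Lemma \ref{uofb}. The leading-term estimate controls a \emph{single} product $Y^IZ^JZ_k$; for a linear combination $\Gamma=\sum_k a_kB_k$ the top-degree terms can cancel completely, and then the top degree of $\Gamma$ is carried by the commutator corrections, about which the estimate says nothing. The phenomenon genuinely occurs here, because brackets of two elements of $\mfb''$ land in $\mfb_\bC'$: choose $H\in\mfa_\bC$ with $\tau(H)=\ve H$ and $\alpha_0(H)\neq 0$ (possible, since $\alpha_0\circ\tau\neq\alpha_0$ and the $\ve$- and $\ve^2$-eigenspaces of $\mfa_\bC$ are complex conjugate), and set $X'':=X_0+\ve X_1+\ve^2X_5$, which satisfies $\tau(X'')=\ve^2X''$. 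Both $HX''$ and $X''H$ belong to $U(\mfb_\bC)\mfb''$ (their right-hand factors $X''$ and $H$ lie in $\mfb''$), so $[H,X'']=HX''-X''H\in\langle U(\mfb_\bC)\mfb''\rangle$; but $\tau([H,X''])=\ve\cdot\ve^2\,[H,X'']=[H,X'']$, and the $X_0$-component of $[H,X'']$ is $\alpha_0(H)X_0\neq 0$, so $[H,X'']$ is a \emph{nonzero} element of $\mfu'_\bC\subset U(\mfb_\bC')$. Thus, with $\langle U(\mfb_\bC)\mfb''\rangle$ read literally as the span of all products $ab$, $a\in U(\mfb_\bC)$, $b\in\mfb''$ (a left ideal), the sum in \eqref{ubprim} is not direct --- and the same mechanism (take $H=e_5$, $X''=X_0-X_7$, giving $[H,X'']=X_0+X_7$) already affects Lemma \ref{uofb} itself. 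What is trivially true is that the span $M$ of the PBW monomials $Y^IZ^J$ with $J\neq 0$ is a direct vector-space complement of $U(\mfb_\bC')$; but $M$ is strictly smaller than the left ideal (it is not stable under left multiplication, precisely because $[\mfb'',\mfb'']\cap\mfb_\bC'\neq 0$), and the left-ideal property is what the subsequent commutativity arguments use. So your proof cannot be accepted as it stands: reproducing the paper's argument faithfully also reproduces its gap, and a repair --- either redefining the complement and re-deriving the later lemmas for it, or an argument excluding the cancellation --- is needed.
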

 
We denote by $\nu:U(\mfb_\bC) \to  U(\mfb_\bC')$ the projection onto the first component of
the decomposition (\ref{ubprim}). Observe that $\nu$ maps $\mfa$ to $\mfa'$ as the orthogonal projection map. 
 
 In the same way as Lemma \ref{even}, we have:
\begin{lemma} The map $\nu$ transforms elements of $U(\mfb_\bC)_{\rm ev}$ into elements of
$U(\mfb'_\bC)_{\rm ev}$. \end{lemma}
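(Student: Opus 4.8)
The plan is to transcribe the proof of Lemma \ref{even}, the only genuine adjustment being that the order-three symmetry $\tau$ forces us to bundle the sign-flips according to its orbits. I would first record the characterization of the even subspace: an element of $U(\mfb_\bC)$ lies in $U(\mfb_\bC)_{\rm ev}$ if and only if it is fixed by each of the seven $\bC$-algebra automorphisms $\rho_0, \ldots, \rho_6$ of $U(\mfb_\bC)$, where $\rho_i$ is the identity on $\mfa$ and satisfies $\rho_i(X_i)=-X_i$, $\rho_i(X_j)=X_j$ for $j\neq i$. Each $\rho_i$ is a well-defined Lie algebra automorphism because flipping the sign of a single $X_i$ respects the relations $[H,X_i]=\alpha_i(H)X_i$ and $[X_i,X_j]=0$; and a PBW monomial is fixed by all the $\rho_i$ precisely when every $X_i$ occurs to an even power. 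The analogous statement holds for $U(\mfb'_\bC)_{\rm ev}$, with three automorphisms $\rho'_0, \rho'_2, \rho'_3$ flipping the signs of $X'_0, X'_2, X'_3$ respectively.

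The crucial point — the only place where the order-three nature of $\tau$ enters — is that the individual $\rho_i$ do not commute with $\tau$, since $\tau$ permutes the $X_i$ within the orbits $\{X_0,X_1,X_5\}$, $\{X_2,X_4,X_6\}$, and $\{X_3\}$. I would therefore group the sign-flips along these orbits, setting $\rho_{015}:=\rho_0\circ\rho_1\circ\rho_5$ and $\rho_{246}:=\rho_2\circ\rho_4\circ\rho_6$ and keeping $\rho_3$. A direct check on generators then gives $\rho_{015}\circ\tau=\tau\circ\rho_{015}$, and likewise for $\rho_{246}$ and $\rho_3$: on each $\tau$-orbit both maps amount to a uniform sign change followed by (or preceded by) a cyclic permutation, and these two operations commute. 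This is the exact analog of the passage from $\rho_0,\rho_7$ to $\rho_{07}$ in Lemma \ref{even}, now with three-element orbits.

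Next I would verify that each grouped automorphism preserves the decomposition (\ref{ubprim}). Since each commutes with $\tau$, it maps the fixed subspace $\mfb'_\bC$ into itself and each $\tau$-eigenspace of $\mfb''$ into itself; being an algebra automorphism, it then preserves both $U(\mfb'_\bC)$ and $\langle U(\mfb_\bC)\mfb''\rangle$, hence commutes with the projection $\nu$. Now if $\Gamma\in U(\mfb_\bC)_{\rm ev}$ then $\rho_i(\Gamma)=\Gamma$ for all $i$, so in particular $\rho_{015}(\Gamma)=\rho_{246}(\Gamma)=\rho_3(\Gamma)=\Gamma$; applying $\nu$ and using the commutation just established yields $\rho_{015}(\nu(\Gamma))=\nu(\Gamma)$, and similarly for the other two. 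Finally I would observe that, restricted to $U(\mfb'_\bC)$, one has $\rho_{015}=\rho'_0$, $\rho_{246}=\rho'_2$, and $\rho_3=\rho'_3$: indeed $\rho_{015}$ fixes $\mfa'$ and sends $X'_0=\frac{1}{\sqrt{3}}(X_0+X_1+X_5)$ to $-X'_0$ while fixing $X'_2$ and $X'_3$, which is exactly the action of $\rho'_0$. Hence $\nu(\Gamma)$ is fixed by $\rho'_0,\rho'_2,\rho'_3$, i.e. $\nu(\Gamma)\in U(\mfb'_\bC)_{\rm ev}$, as claimed.

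I expect the main (and admittedly modest) obstacle to be the bookkeeping in the commutation identity $\rho_{015}\circ\tau=\tau\circ\rho_{015}$, which must be checked orbit by orbit; once this is in place, everything else is a formal transcription of Lemma \ref{even}, with the identifications $\rho_{015}|_{U(\mfb'_\bC)}=\rho'_0$ etc. supplying the link between evenness in $U(\mfb_\bC)$ and evenness in $U(\mfb'_\bC)$.
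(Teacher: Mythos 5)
Your proof is correct and is essentially the paper's own argument: the paper proves this lemma simply by remarking that it goes ``in the same way as Lemma \ref{even}'', and your grouping of the sign-flip automorphisms along the $\tau$-orbits $\{0,1,5\}$, $\{2,4,6\}$, $\{3\}$ --- so that $\rho_{015}$, $\rho_{246}$, $\rho_3$ commute with $\tau$, preserve the decomposition (\ref{ubprim}), and restrict on $U(\mfb'_\bC)$ to the sign flips of $X'_0$, $X'_2$, $X'_3$ --- is exactly the intended order-three analogue of the passage to $\rho_{07}$, $\rho_{16}$, $\rho_{35}$ in the $F_4$ case. Nothing is missing.
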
  
 
    From the table in Section 2,  the degrees of the fundamental homogeneous generators for
    ${\rm Sym}(\mfa)^{W_{E_6}}$  are $2, 5, 6, 8, 9,$ and $12$, whereas  for ${\rm Sym}(\mfa')^{W_{G_2}}$ they are $2$
    and  $6.$ We will prove as follows:
    
    \begin{proposition}\label{u1u3} (a) There exists a system $u_1,\ldots, u_6$ of fundamental generators of ${\rm Sym}(\mfa)^{W_{E_6}}$ 
of degrees $2, 5, 6, 8,9,$ and $12$ respectively, such that $\nu(u_1)$ and  $\nu(u_3)$
are fundamental generators of ${\rm Sym}(\mfa')^{W_{G_2}}$.

(b) Let $\Omega\in U_2(\mfb)$ be the Laplacian and  $\Omega_1,\ldots, \Omega_6$  the elements of $U(\mfb)$ associated to $u_1,\ldots, u_6$ by Proposition
\ref{long}. Then $\Omega':=\nu(\Omega)$ is a multiple of the Laplacian of $\mfb'$.
Furthermore, 
$\Omega_3':=\nu(\Omega_3)$  belongs to $U(\mfb_\bC')_{\rm ev}$, has its symbol
equal to $\nu(u_3)$, 
 and commutes with $\Omega'$.
\end{proposition}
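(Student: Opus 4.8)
The plan is to mirror the proof of Proposition \ref{u1u2}, the one genuinely new feature being that $\tau$ now has order three, so its nontrivial eigenvalues $\ve,\ve^2$ are complex; this is precisely why the whole discussion has been complexified to $\mfb_\bC$ and $\mfb_\bC'$. Before starting I would record the two order-three analogues of the tools already used in the $F_4$ section, both proved verbatim. First, the complex version of Lemma \ref{abp}: (a) every $\Gamma\in U(\mfb_\bC')$ satisfies $\tau(\Gamma)=\Gamma$, and (b) if $\tau(\Gamma)=\ve^s\Gamma$ with $s\in\{1,2\}$ then $\Gamma\in\langle U(\mfb_\bC)\mfb''\rangle$; its proof is the same as that of Lemma \ref{abp}, using the decomposition (\ref{ubprim}), and the essential point is simply that both $\ve$ and $\ve^2$ differ from $1$. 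Second, the commutativity of the symbol square $\nu\circ\mu=\mu'\circ\nu$, established exactly as in the $F_4$ case, since $\nu(\mu(B))=0$ for every $B\in\mfb''$.

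For part (a) I would fix a concrete coordinate realization of the root system of type $E_6$ together with an explicit system of fundamental $W_{E_6}$-invariants of degrees $2,5,6,8,9,12$ (along the lines of \cite{Me}), and then compute the image under $\nu$ — which on $\mfa$ is orthogonal projection onto the two-dimensional fixed space $\mfa'$ — of the degree-$2$ and degree-$6$ generators. The degree-$2$ generator projects to the quadratic $W_{G_2}$-invariant, a nonzero multiple of the norm form on $\mfa'$; the real content is to confirm that the projection of the degree-$6$ generator is a nonzero degree-$6$ invariant not proportional to the cube of the quadratic one, so that the two together generate ${\rm Sym}(\mfa')^{W_{G_2}}$ (whose generators have degrees $2$ and $6$). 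One notes in passing that the odd-degree generators must project to $0$, since $W_{G_2}$ admits no odd-degree invariants.

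For part (b) I would first treat the Laplacian. Writing $\Omega=u_1+\sum_{i=0}^{6}X_i^2$ and grouping the $X_i^2$ according to the three $\tau$-orbits $\{X_0,X_1,X_5\}$, $\{X_2,X_4,X_6\}$, $\{X_3\}$, I would pass to the discrete Fourier eigenbasis of each orbit: the $\tau$-fixed combination of the first orbit is $\sqrt{3}\,X_0'$, and the two remaining eigenvectors lie in $\mfb''$. A short computation then gives $X_0^2+X_1^2+X_5^2=(X_0')^2+R$ with $R\in\langle U(\mfb_\bC)\mfb''\rangle$ (the cross terms each ending in a $\mfb''$-factor), and similarly for the second orbit, while $\nu(X_3^2)=(X_3')^2$; applying $\nu$ and using (\ref{laplace}) shows $\nu(\Omega)$ is a multiple of the Laplacian of $\mfb'$, the $1/\sqrt3$ normalizations being exactly what makes $X_0',X_2',X_3'$ orthonormal after rescaling the metric on $\mfu'$. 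That $\Omega_3':=\nu(\Omega_3)$ has symbol $\nu(u_3)$ then follows by applying the symbol square to $\Omega_3$, and that $\Omega_3'\in U(\mfb_\bC')_{\rm ev}$ follows from the order-three analogue of Lemma \ref{even} already announced in the text.

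The commutation $[\Omega_3',\Omega']=0$ is the final step and, as in the $F_4$ case, is where the eigenvalue bookkeeping does the work. Starting from $[\Omega_3,\Omega]=0$ (valid since $\Omega_3$ and $\Omega$ are among the pairwise-commuting operators furnished by Proposition \ref{long} for the untwisted diagram $E_6^{(1)}$) and decomposing $\Omega_3=\Omega_3'+\Omega_3''$, $\Omega=\Omega'+\Omega''$ with the double-primed parts in $\langle U(\mfb_\bC)\mfb''\rangle$, I would check that every term of the expansion except $\Omega_3'\Omega'$ and $\Omega'\Omega_3'$ lies in $\langle U(\mfb_\bC)\mfb''\rangle$. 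The only nonobvious terms have the form $C\,\Xi$ with $C\in\mfb''$ an eigenvector and $\Xi\in U(\mfb_\bC')$ (hence $\tau$-fixed); then $\tau(C\Xi)=\ve^sC\Xi$ with $s\in\{1,2\}$, and the complex Lemma \ref{abp}(b) places $C\Xi$ in the span. Applying $\nu$, which annihilates $\langle U(\mfb_\bC)\mfb''\rangle$ and fixes $U(\mfb_\bC')$, yields $\Omega_3'\Omega'=\Omega'\Omega_3'$. The main obstacle I anticipate is the computation in part (a) — exhibiting explicit $E_6$ invariants and verifying that the degree-$6$ projection is genuinely new rather than a power of the quadratic; by contrast the order-three algebra of part (b) is a routine complexification of the $F_4$ argument, hinging only on $\ve,\ve^2\neq1$.
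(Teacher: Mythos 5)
Your proposal is correct and follows essentially the same route as the paper: for part (a), restriction of Mehta's explicit $W_{E_6}$-invariants to $\mfa'$ together with the check that the degree-$6$ projection is not proportional to the cube of the quadratic one (the paper verifies this by evaluating at $x_1=0$ and $x_4=0$), and for part (b), the complexified order-three analogues of Lemmas \ref{abp} and \ref{even} plus the eigenvalue bookkeeping with $\ve,\ve^2\neq 1$, which is precisely the argument the paper invokes by reference to the end of Section \ref{patru}. The only cosmetic difference is that you split $X_0^2+X_1^2+X_5^2$ via the discrete Fourier eigenbasis of each $\tau$-orbit, whereas the paper uses the equivalent difference-of-squares identity and checks $\mfb''$-membership of the differences with the same eigenvectors $X_0+\ve X_1+\ve^2 X_5$.
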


   To prove the proposition, we need the following  presentation of the root system of type $E_6$, see \cite[p.~1095]{Me}.
    One takes $\mfa = \bR^6$.  
     The positive roots are:
     
     \begin{align*} 2x_i, \quad 1\le i \le 4;\quad & x_1\pm x_2\pm x_3\pm x_4;\\
     x_1\pm x_2 \pm\sqrt{2}x_5;\quad & x_3\pm x_4 \pm\sqrt{2}x_5; \\
     x_1\pm x_3\pm \frac{1}{\sqrt{2}}(x_5-\sqrt{3}x_6);\quad &  x_2\pm x_4\pm \frac{1}{\sqrt{2}}(x_5-\sqrt{3}x_6);\\
        x_1\pm x_4\pm \frac{1}{\sqrt{2}}(x_5+\sqrt{3}x_6);\quad & x_2\pm x_3\pm \frac{1}{\sqrt{2}}(x_5+\sqrt{3}x_6).
      \end{align*}
      A simple root system is:
      \begin{align*}{}&\alpha_1=x_2- x_3- \frac{1}{\sqrt{2}}(x_5+\sqrt{3}x_6),\quad \alpha_2=x_3- x_4 +\sqrt{2}x_5, \quad \alpha_3=2x_4, \\{}& \alpha_4=x_3- x_4 -\sqrt{2}x_5,\quad
      \alpha_5=x_2- x_3+ \frac{1}{\sqrt{2}}(x_5+\sqrt{3}x_6),\quad \alpha_6=x_1-x_2-x_3-x_4.\end{align*}
 The highest root is $$\theta= \alpha_1+ 2\alpha_2+ 3\alpha_3+ 2\alpha_4 + \alpha_5 + 2\alpha_6 =2x_1.$$
 Thus $\alpha_0=-2x_1$. 
 
The subspace $\mfa'$ of $\mfa$ is described by  the equations
 $\alpha_2=\alpha_4=\alpha_6$ and $\alpha_0=\alpha_1=\alpha_5$; they are equivalent to:
 $$x_1=-x_2=x_3, \quad x_5=x_6=0.$$
 A simple root system for $\Pi|$ consists of the following two linear functions on $\mfa'$: 
 $$\alpha_2| = x_1-x_4 \ {\rm and} \ \alpha_3|=2x_4.$$
The short positive roots are   
$x_1-x_4,-2x_1,$ and $-x_1-x_4.$

 The following is a system of fundamental $W_{E_6}$-invariant elements of 
 ${\rm Sym}(\mfa^*)$ (see \cite[p.~1096]{Me}):
\begin{align*}&v_k:=\left(2\sqrt{\frac{2}{3}}x_6\right)^{k}+\left(\sqrt{\frac{2}{3}}(\sqrt{3}x_5-x_6)\right)^{k}+\left(\sqrt{\frac{2}{3}}(-\sqrt{3}x_5-x_6)\right)^{k}\\{}&
+\sum_{\pm}\left( \pm x_3\pm x_4 -\sqrt{\frac{2}{3}}x_6\right)^k+\sum_{\pm}\left( \pm x_1\pm x_2 -\sqrt{\frac{2}{3}}x_6\right)^k \\
{}& +\sum_{\pm}\left(\pm x_2\pm x_4+\frac{1}{\sqrt{6}}(\sqrt{3} x_5+x_6)\right)^k +\sum_{\pm}\left(\pm x_1\pm x_3+\frac{1}{\sqrt{6}}(\sqrt{3} x_5+x_6)\right)^k\\{}& +\sum_{\pm}\left(\pm x_2\pm x_3-\frac{1}{\sqrt{6}}(\sqrt{3} x_5-x_6)\right)^k+\sum_{\pm}\left(\pm x_1\pm x_4-\frac{1}{\sqrt{6}}(\sqrt{3} x_5+x_6)\right)^k,
  \end{align*}
 where  $k=2,5,6,8,9,12$ (here $\Sigma_{\pm}$ denotes the sum over all four possible choices of the two signs).
 We are particularly interested in the cases $k=2$ and $k=6$. 
 The images of  $v_2$ and $v_6$ by the restriction map $\nu : {\rm Sym}(\mfa^*)\to {\rm Sym}((\mfa')^*)$  are
\begin{align*} {}& \nu(v_2) = 6\left[(x_1-x_4)^2 + (2x_1)^2 + (x_1+x_4)^2\right], \\
{}& \nu(v_6)=6\left[(x_1-x_4)^6 + (2x_1)^6 + (x_1+x_4)^6\right].
\end{align*}
Since $W_{G_2}$ permutes the short roots, the two polynomials above are $W_{G_2}$-invariant.  
  They are a system of fundamental invariant polynomials:
the reason is that the degrees of any two such polynomials are 2 and 6 and 
$\nu(v_6)$ is not a scalar multiple of $(\nu(v_2))^3$. Since otherwise we would have
$$(3x_1^2+x_4^2)^3=r\left[(x_1-x_4)^6+(2x_1)^6+(x_1+x_4)^6\right]$$
for some $r\in \bR$. By making $x_1=0$ one obtains $1=r\cdot 2$ and by making $x_4=0$
one obtains $27= r\cdot 66$, which is a contradiction.
 One now identifies $\mfa=\mfa^*$ and $\mfa'=(\mfa')^*$ via the canonical inner product on $\mfa=\bR^6$ 
 and concludes that point (a) of Proposition \ref{u1u3} holds true.
 
 From now on we regard again $v_2$ and $v_6$ as elements of ${\rm Sym}(\mfa)$ and ${\rm Sym}(\mfa')$.
 That is,
  \begin{align}\label{v2}&v_k:=\left(2\sqrt{\frac{2}{3}}e_6\right)^{k}+\left(\sqrt{\frac{2}{3}}(\sqrt{3}e_5-e_6)\right)^{k}+\left(\sqrt{\frac{2}{3}}(-\sqrt{3}e_5-e_6)\right)^{k}\\{}&
+\sum_{\pm}\left( \pm e_3\pm e_4 -\sqrt{\frac{2}{3}}e_6\right)^k+\sum_{\pm}\left( \pm e_1\pm e_2 -\sqrt{\frac{2}{3}}e_6\right)^k \nonumber \\
{}& +\sum_{\pm}\left(\pm e_2\pm e_4+\frac{1}{\sqrt{6}}(\sqrt{3} e_5+e_6)\right)^k +\sum_{\pm}\left(\pm e_1\pm e_3+\frac{1}{\sqrt{6}}(\sqrt{3} e_5+e_6)\right)^k\nonumber \\{}& +\sum_{\pm}\left(\pm e_2\pm e_3-\frac{1}{\sqrt{6}}(\sqrt{3} e_5-e_6)\right)^k+\sum_{\pm}\left(\pm e_1\pm e_4-\frac{1}{\sqrt{6}}(\sqrt{3} e_5+e_6)\right)^k,\nonumber 
  \end{align}
 for $k=2,6$.

Let  $\Omega_1,\ldots, \Omega_6$ be the elements of
  $U(\mfb)$ described in Theorem \ref{thm:main} for $\Pi$ of type $E_6$ (see { Appendix \ref{trei}}). 
One has
$$\Omega_1=u_1+X_0^2 +\cdots + X_6^2, \quad {\rm where} \ u_1=v_2.$$ 
Observe that
\begin{align*}
{} & X_0^2+ X_1^2+X_5^2= \frac{1}{3}(X_0+X_1+X_5)^2+\frac{1}{3}(X_0-X_1)^2+\frac{1}{3}(X_1-X_5)^2+\frac{1}{3}(X_0-X_5)^2,\\
{} & X_2^2+ X_4^2+X_6^2= \frac{1}{3}(X_2+X_4+X_6)^2+\frac{1}{3}(X_2-X_4)^2+\frac{1}{3}(X_4-X_6)^2+\frac{1}{3}(X_6-X_2)^2.
\end{align*}
\begin{lemma} The differences  $X_0-X_1, X_1-X_5, X_0-X_5, X_2-X_4, X_4-X_6,$ and $X_6-X_2$ are in $\mfb''$.
\end{lemma}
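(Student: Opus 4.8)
The plan is to characterize membership in $\mfb''$ by means of the averaging operator associated to $\tau$, and then to observe that applying this operator to each of the six differences produces a telescoping sum that vanishes. Since $\tau$ has order three, its eigenvalues on $\mfb_\bC$ are cube roots of unity; in particular $\ve^3 = 1$ and $1 + \ve + \ve^2 = 0$. Consequently the operator $P := \tfrac{1}{3}(\mathrm{id} + \tau + \tau^2)$ acts as the identity on the eigenvalue-$1$ subspace $\mfb_\bC'$ and as $0$ on each of the two eigenspaces whose direct sum is $\mfb''$ (on the $\ve$-eigenspace it scales by $\tfrac{1}{3}(1 + \ve + \ve^2) = 0$, and likewise on the $\ve^2$-eigenspace, using $\ve^4 = \ve$). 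Thus $P$ is exactly the projection onto $\mfb_\bC'$ along $\mfb''$, and for any $Z \in \mfb_\bC$ one has $Z \in \mfb''$ if and only if $(\mathrm{id} + \tau + \tau^2)(Z) = 0$.

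With this criterion in hand, I would verify each of the six differences directly. Recall that $\tau$ cyclically permutes the indices $0 \to 1 \to 5 \to 0$ and $2 \to 4 \to 6 \to 2$. Hence, for instance,
\begin{align*}
(\mathrm{id} + \tau + \tau^2)(X_0 - X_1) = (X_0 - X_1) + (X_1 - X_5) + (X_5 - X_0) = 0,
\end{align*}
so $X_0 - X_1 \in \mfb''$; the identical telescoping occurs for $X_1 - X_5$ and $X_0 - X_5$, and for $X_2 - X_4$, $X_4 - X_6$, $X_6 - X_2$ using the second $3$-cycle. Equivalently, one may note that each difference lies in the span of a single $\tau$-orbit $\{X_0, X_1, X_5\}$ or $\{X_2, X_4, X_6\}$, and that within such a span the component along the $\tau$-fixed vector $X_0 + X_1 + X_5$ (respectively $X_2 + X_4 + X_6$) is proportional to the sum of the coefficients; since each difference has coefficient sum zero, it has no $\mfb_\bC'$-component and therefore lies in $\mfb''$.

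The argument is entirely routine, so there is no genuine obstacle; the only point requiring a moment's care is the identification of $\mfb''$ as the kernel of $\mathrm{id} + \tau + \tau^2$, which rests on the relation $1 + \ve + \ve^2 = 0$ forced by $\tau^3 = \mathrm{id}$. Once this is in place, every verification reduces to the telescoping cancellation displayed above.
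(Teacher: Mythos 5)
Your proof is correct, and it takes a genuinely (if mildly) different route from the paper's. The paper's proof is a two-line eigenvector computation: it observes that $X_0+\ve X_1+\ve^2 X_5$ and $X_1+\ve X_0+\ve^2 X_5$ both lie in $\mfb''$ (they are eigenvectors of $\tau$ for the eigenvalues $\ve^2$ and $\ve$ respectively) and that their difference is $(1-\ve)(X_0-X_1)$; the remaining five differences are handled identically. You instead characterize $\mfb''$ as the kernel of the averaging projector $P=\tfrac{1}{3}(\mathrm{id}+\tau+\tau^2)$ and kill each difference by a telescoping sum. These are the same piece of linear algebra about an order-three automorphism viewed from dual sides: the paper decomposes along explicit eigenvectors, you project onto the fixed subspace. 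Your version has the small advantage of dispatching all six differences by one uniform mechanism (each lies in the span of a single $\tau$-orbit with coefficient sum zero) without writing down any eigenvector, while the paper's is shorter on the page. One caveat worth flagging: your key identity $1+\ve+\ve^2=0$ requires $\ve$ to be a primitive \emph{cube} root of unity, whereas the paper literally defines $\ve=\cos(\pi/3)+i\sin(\pi/3)$, a primitive sixth root. This is evidently a typo in the paper --- with that $\ve$ the eigenspace $\{Z\in\mfb_\bC \mid \tau(Z)=\ve Z\}$ of the order-three map $\tau$ would be zero and $\mfb''$ would not complement $\mfb_\bC'$, contradicting the stated eigenspace decomposition --- so your implicit correction ($\ve = \cos(2\pi/3)+i\sin(2\pi/3)$) is the right reading, but since your argument, unlike the paper's handling elsewhere, genuinely invokes $1+\ve+\ve^2=0$, you should state that normalization explicitly.
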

\begin{proof}
Both $X_0+\ve X_1+\ve^2 X_5$ and $X_1+\ve X_0 +\ve^2 X_5$ are in $\mfb''$ and
their difference is $(1-\ve)(X_0-X_1)$. 
\end{proof}
This implies that  
\begin{equation}\label{conclu}\nu(\Omega_1) =\nu(u_1) + (X_0')^2 +  (X_2')^2 + (X_3')^2,\end{equation}
 which  is a multiple of the Laplacian of $\mfb'$.  
 
  Since $\tau: \mfb \to \mfb$ is a Lie algebra homomorphism, it
  induces an algebra homomorphism $\tau : U(\mfb_\bC) \to U(\mfb_\bC)$.

\begin{lemma} \label{abp1}
(a) If $a\in U(\mfb_\bC')$ then $\tau(a)=a$.

(b) If $a\in U(\mfb_\bC)$ has the property that $\tau(a)=\varepsilon a$ or
$\tau(a)=\varepsilon^2 a$ then
$a\in \langle U(\mfb_\bC)\mfb''\rangle$.
\end{lemma}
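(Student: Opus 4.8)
The plan is to prove Lemma~\ref{abp1} as the order-three analogue of Lemma~\ref{abp}, replacing the single sign $-1$ used there by the two nontrivial eigenvalues $\ve$ and $\ve^2$ of $\tau$. Part (a) is immediate and I would dispose of it first: by construction $\mfb_\bC'$ is the fixed-point set of $\tau$, so $\tau$ restricts to the identity on $\mfb_\bC'$; since $U(\mfb_\bC')$ is generated as an associative algebra by $\mfb_\bC'$ and $\tau:U(\mfb_\bC)\to U(\mfb_\bC)$ is an algebra homomorphism, it follows that $\tau(a)=a$ for every $a\in U(\mfb_\bC')$.

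For part (b), the key preliminary observation I would record is that $\tau$ stabilizes the second summand of the decomposition (\ref{ubprim}). Indeed, $\mfb''$ is the direct sum of the $\ve$- and $\ve^2$-eigenspaces of $\tau$, each of which is carried into itself by $\tau$, so $\tau(\mfb'')=\mfb''$; since $\tau$ is an algebra automorphism, $\tau\big(U(\mfb_\bC)\mfb''\big)=\tau(U(\mfb_\bC))\,\tau(\mfb'')=U(\mfb_\bC)\mfb''$, and hence $\tau\big(\langle U(\mfb_\bC)\mfb''\rangle\big)=\langle U(\mfb_\bC)\mfb''\rangle$.

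With these in hand I would conclude as follows. Given $a$ with $\tau(a)=\ve a$ (the case $\tau(a)=\ve^2 a$ is identical), use (\ref{ubprim}) to write $a=a'+a''$ with $a'\in U(\mfb_\bC')$ and $a''\in\langle U(\mfb_\bC)\mfb''\rangle$. Applying $\tau$ and using part (a) together with the preliminary observation gives $\tau(a)=a'+\tau(a'')$ with $\tau(a'')\in\langle U(\mfb_\bC)\mfb''\rangle$. Comparing this with $\tau(a)=\ve a=\ve a'+\ve a''$ and projecting onto the first summand of the direct sum (\ref{ubprim}) yields $a'=\ve a'$. Since $\ve\neq 1$, this forces $a'=0$, so $a=a''\in\langle U(\mfb_\bC)\mfb''\rangle$, as required.

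There is essentially no serious obstacle here, as the argument is a near-verbatim adaptation of Lemma~\ref{abp}. The only point that needs slightly more care than in the order-two case is the preliminary observation that $\tau$ stabilizes $\langle U(\mfb_\bC)\mfb''\rangle$, which now relies on $\mfb''$ being $\tau$-invariant as a \emph{sum} of two eigenspaces rather than a single eigenspace. Once that is in place, the scalar $\ve\neq 1$ plays exactly the role that $-1\neq 1$ played before.
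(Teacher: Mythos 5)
Your proof is correct and matches the paper's intended argument: the paper states Lemma~\ref{abp1} without proof precisely because it is the order-three analogue of Lemma~\ref{abp}, whose proof (decompose $a=a'+a''$ via the direct sum, apply $\tau$, compare first components, and use that the eigenvalue differs from $1$) you reproduce faithfully. Your explicit verification that $\tau$ stabilizes $\langle U(\mfb_\bC)\mfb''\rangle$ because $\mfb''$ is a sum of two $\tau$-stable eigenspaces is a detail the paper leaves implicit, and it is handled correctly.
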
  
 
Proposition \ref{u1u3} can now be proved by the same argument as the one used at the  end of Section \ref{patru}
when proving Proposition \ref{u1u2}.   
 
From Proposition \ref{u1u3} we now deduce:

\begin{corollary}
{ The conclusion of Theorem \ref{thm:main} is} true in the case when $\Pi$ is of type $G_2$ and $\beta$ is the short dominant root. 
\end{corollary}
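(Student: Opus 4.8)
The plan is to deduce this corollary from Proposition \ref{u1u3} in exactly the same way that Corollary \ref{cor:f4} was deduced from Proposition \ref{u1u2}. The key point is that Proposition \ref{u1u3}(b) already establishes the full conclusion of Theorem \ref{thm:main} for one particular choice of fundamental $W_{G_2}$-invariant generators, namely $\nu(u_1)=\nu(v_2)$ and $\nu(u_3)=\nu(v_6)$: the elements $\Omega'=\nu(\Omega_1)$ and $\Omega_3'=\nu(\Omega_3)$ lie in $U(\mfb'_\bC)_{\rm ev}$, have symbols $\nu(u_1)$ and $\nu(u_3)$ respectively, have the correct degrees $2$ and $6$, and satisfy $[\Omega_3', \Omega']=0$. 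Since $\Omega'$ is a scalar multiple of the Laplacian of $\mfb'$ (so that commuting with $\Omega'$ is equivalent to commuting with the Laplacian), all three required properties in Theorem \ref{thm:main} are verified for this distinguished system of generators.

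First I would fix an arbitrary system $u_1', u_2'$ of fundamental homogeneous $W_{G_2}$-invariant generators of ${\rm Sym}(\mfa')^{W_{G_2}}$, of degrees $2$ and $6$. Since $\{\nu(u_1), \nu(u_3)\}$ is itself such a system, each $u_j'$ can be written as a polynomial $u_j' = f_j(\nu(u_1), \nu(u_3))$ in two variables; moreover, by comparing degrees, $u_1'$ is a scalar multiple of $\nu(u_1)$ and $u_2'$ is a linear combination of $\nu(u_3)$ and $(\nu(u_1))^3$. I would then define $\Omega_j' := f_j(\Omega', \Omega_3')$ for $j=1,2$, substituting the commuting operators $\Omega'$ and $\Omega_3'$ into the same polynomials.

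It remains to check that $\Omega_1', \Omega_2'$ satisfy the three conditions of Theorem \ref{thm:main} relative to $u_1', u_2'$. Commutativity with $\Omega'$ (equivalently, with the Laplacian) is automatic: $\Omega'$ and $\Omega_3'$ commute by Proposition \ref{u1u3}(b), hence any polynomial in them commutes with $\Omega'$, and in particular $[\Omega_1', \Omega_2']=0$. Membership in $U(\mfb'_\bC)_{\rm ev}$ follows because $U(\mfb'_\bC)_{\rm ev}$ is a subalgebra containing both $\Omega'$ and $\Omega_3'$. The degree condition and the symbol condition both follow from the fact that the symbol map is a graded algebra homomorphism: applying it to $\Omega_j' = f_j(\Omega', \Omega_3')$ yields $f_j(\nu(u_1), \nu(u_3)) = u_j'$ in the top degree, which simultaneously pins down the symbol as $u_j'$ and forces $\deg \Omega_j' = \deg u_j'$.

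The only step requiring a moment's care is verifying that the symbol of $\Omega_j'$ is genuinely $u_j'$ rather than something of lower degree; this relies on the symbols of $\Omega'$ and $\Omega_3'$ being the algebraically independent generators $\nu(u_1)$ and $\nu(u_3)$, so that no cancellation of top-degree terms can occur in $f_j(\Omega', \Omega_3')$. Since $\nu(u_1)$ and $\nu(u_3)$ are algebraically independent (being fundamental generators), this is immediate, and no genuine obstacle arises—the argument is a verbatim transcription of the proof of Corollary \ref{cor:f4}, now with two generators in place of four.
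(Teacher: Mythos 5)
Your overall strategy (polynomial substitution of an arbitrary system of fundamental $W_{G_2}$-invariants into the commuting pair produced by Proposition \ref{u1u3}) is indeed the paper's strategy, but you have missed the one point that the paper singles out as \emph{the only essential difference} between the $G_2$ case and the $F_4$ case: a reality issue. In the $F_4$ section the folding automorphism $\tau$ has order two, its eigenvalues $\pm 1$ are real, and the projection $\nu$ lands in the enveloping algebra of the \emph{real} Lie algebra $\mfb'$. In the $G_2$ section $\tau$ has order three, its nontrivial eigenvalues $\ve, \ve^2$ are non-real, so the eigenspace decomposition and hence $\nu$ only exist after complexification: Proposition \ref{u1u3} gives $\Omega_3' = \nu(\Omega_3) \in U(\mfb'_\bC)_{\rm ev}$, and there is no reason for it to lie in $U(\mfb')_{\rm ev}$. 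Theorem \ref{thm:main}, however, asks for elements of $U(\mfb)_{\rm ev}$ with $\mfb$ a real Lie algebra. Your proposal in fact concedes the gap in its own wording: you verify ``membership in $U(\mfb'_\bC)_{\rm ev}$,'' which is not what the theorem requires, so the elements $f_j(\Omega', \Omega_3')$ you construct may have genuinely complex PBW coefficients and your argument is not a ``verbatim transcription'' of the proof of Corollary \ref{cor:f4}.

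The repair is short and is what the paper does. Fix the real basis $X_0', X_2', X_3', H_2', H_3'$ of $\mfb'$ (with $\alpha_i(H_j')=\delta_{ij}$); relative to the induced PBW basis one has the splitting $U(\mfb'_\bC)_{\rm ev} = U(\mfb')_{\rm ev} \oplus \sqrt{-1}\, U(\mfb')_{\rm ev}$, so one can write $\Omega_3' = A + \sqrt{-1}\,B$ with $A, B \in U(\mfb')_{\rm ev}$. Since $\Omega'$ is real, separating real and imaginary parts in $[\Omega_3', \Omega'] = 0$ yields $[A, \Omega'] = 0$; since $\nu(u_3)$ has real coefficients, the symbol of $A$ is $\nu(u_3)$ and $\deg A = 6$. (This is the same real-part trick used in the proof of Proposition \ref{long} in Appendix \ref{trei}.) One then runs your substitution argument with $A$ in place of $\Omega_3'$, and the rest of your reasoning — algebraic independence of the symbols, the weighted-homogeneity of the $f_j$ giving the degree bound, and commutativity of polynomials in the commuting pair $(\Omega', A)$ — goes through unchanged. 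Incidentally, your worry about top-degree cancellation is unnecessary: the paper's symbol map $\mu'$ is an algebra homomorphism outright, so $\mu'(f_j(\Omega', A)) = f_j(\nu(u_1), \nu(u_3)) = u_j'$ with no cancellation analysis needed.
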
      
     
{ This can be proved in the same way as Corollary \ref{cor:f4}. The only essential difference is that $\Omega_3'$ belongs
to   $U(\mfb'_\bC)_{\rm ev}$ and not necessarily to  $U(\mfb')_{\rm ev}$. However, one can construct out of 
it   an element of $U(\mfb')_{\rm ev}$ whose symbol is $\nu(u_3)$, has degree equal to $6$, and which commutes with  $\Omega'$.
Concretely, consider the basis of $\mfb'$ which consists of $X_0', X_2', X_3'$ along with 
$H_2', H_3'\in \mfa'$ such that $\alpha_i(H_j')=\delta_{ij}$, $i,j=2$ or $3$.  
Relative to the induced PBW basis,  $U(\mfb'_\bC)_{\rm ev}=U(\mfb')_{\rm ev} \bigoplus \sqrt{-1} U(\mfb')_{\rm ev}$.
The aforementioned element of $U(\mfb')_{\rm ev}$ is just the first term in the decomposition of $\Omega_3'$ 
relative to this splitting.}

   \appendix
   
   \section{The case when  $\beta$ is the highest root: the theorem of Etingof}\label{trei}

The complete integrability of the periodic quantum Toda lattice corresponding to a
Dynkin diagram extended by the highest root  was 
proved by Etingof in \cite{etingof}. { We considered necessary to include the details
of his proof for reasons of completeness and clarity. The approach below is slightly 
different from the original one, in that we preferred to use complex Lie groups and loop spaces rather
than formal groups.}
As background references we indicate \cite{Kac}, \cite{Pr-Se}, and \cite{Ne}. 
Let $G$ be a simple,  simply connected, complex Lie group of Lie algebra
$\g$ and  $T\subset G$ a maximal torus, whose Lie algebra we denote by $\h$. Let also $\langle \cdot, \cdot\rangle$ be the  Killing form
on $\g$  normalized such that $\langle\alpha, \alpha\rangle=2$ 
for any long root $\alpha$. 
Pick a simple root system $\alpha_1,\ldots, \alpha_r\in \h^*$ and let $\theta$ be the corresponding  highest root. Consider the differential operator 
on $\h$ given by
$$M=\frac{1}{2}\Delta - K e^{-\theta }- \sum_{i=1}^r e^{\alpha_i}$$  where 
$\Delta$ 
is the Laplacian relative to  $\langle\cdot, \cdot\rangle$ and $K$ is a parameter.  The goal is 
to construct differential operators on $\h$ which commute with $M$.
Recall \cite{Kac} that the corresponding (non-twisted) affine Kac-Moody Lie algebra is $\hat{\g}= {\mathcal L}_{\rm pol}(\g) \oplus \bC c$, 
where ${\mathcal L}_{\rm pol}(\g)=\g\otimes \bC[z, z^{-1}]$ is the space of Laurent polynomials
with coefficients in $\g$,
$z$ being on the unit circle $S^1$.
The Lie bracket on $\hat{\g}$ is defined by
$$[u,v](z)=[u(z), v(z)] + \left({\rm Res} \langle u'(z), v(z)\rangle \right) c$$
for any $u,v\in {\mathcal L}_{\rm pol}(\g)$, where Res stands for residue (the coefficient of $z^{-1}$)
and  $c$ is a central element. 

The Chevalley generators of $\hat{\g}$  are $e_i, h_i, f_i$, where $0\le i \le r$.
Here
$\{h_i\}$ is the  basis of $\hat{\h}:=\h \oplus \bC c$ consisting of the simple affine coroots, $\{e_i\}$ are root vectors for simple affine roots 
and $\{f_i\}$ root vectors for the negatives of those roots. 
Denote by $\n^+$ and $\n^-$ the Lie subalgebras of $\hat{\g}$ generated by $\{e_i\}$ and $\{f_i\}$ respectively.
They can be described as:
$$\n^+=\bigoplus_{\alpha\in \hat{R}^+}\hat{\g}_\alpha,\quad \n^- =\bigoplus_{\alpha\in \hat{R}^-}\hat{\g}_\alpha,$$
where the first sum runs over all positive affine roots and the second sum over all negative affine roots,
$\hat{\g}_\alpha$ being the corresponding root space. 
We thus have the triangular decomposition
$\hat{\g} = \n^+ \oplus \hat{\h} \oplus \n^-.$ 

Let us  now consider  the group ${\mathcal L} (G)$ of all smooth maps from the circle $S^1$ to $G$ along with   
 its universal central extension $\tilde{\mathcal L} (G)$,  see \cite[Section 4.4]{Pr-Se}. Both ${\mathcal L} (G)$
 and $\tilde{\mathcal L} (G)$ are
 Fr\'echet-Lie groups, of  Lie algebras ${\mathcal L} (\g) $ (space of all smooth maps $S^1 \to \g$)
 and ${\mathcal L}(\g)\oplus \bC c  $, respectively. There is a well-defined exponential map
  $\exp : \tilde{\mathcal L} (\g)\to 
 \tilde{\mathcal L} (G)$, which is smooth.

 Let $U^+, U^-\subset G$ be the unipotent radicals of the two standard ``opposite" Borel subgroups
 that contain $T$. Denote by $\cU^{+}$ the subgroup of ${\mathcal L} (G)$ whose elements are
  smooth boundary values of holomorphic maps  $\gamma: \{z\in \bC \mid |z|<1\} \to G$ with  $\gamma(0) \in U^+$. Similarly, $\cU^{-}$ 
 is the subgroup of ${\mathcal L} (G)$ consisting 
 of smooth boundary values of holomorphic maps $\gamma: \{z\in \bC \mid |z|>1\}\cup\{\infty\}  \to G$ such that $\gamma(\infty) \in U^-$.
 There exist canonical embeddings $\cU^{\pm} \subset \tilde{\mathcal L} (G)$. 
 Moreover, if ${\mathcal T}:=\exp (\hat{\h})={ T}\times \bC^*$, then the map   
 $\cU^+ \times {\mathcal T} \times \cU^- \to \tilde{\mathcal L} (G)$, $(u^+, g, u^-)\mapsto u^+ g u^-$ is injective
 onto  $\cU^+  {\mathcal T}  \cU^-$, which is  an open subspace of $\tilde{\mathcal L} (G)$, see \cite[Theorem 8.7.2]{Pr-Se}.
Any element of $\hat{\g}$ induces a canonical left-invariant  vector field on $\tilde{\mathcal L}(G)$, hence also on
its open subspace   $\cU^+ {{\mathcal T}} \cU^-$. 
One obtains in this way a representation of the universal enveloping algebra $U(\hat{\g})$ on  $C^\infty(\cU^+ {{\mathcal T}} \cU^-,\bC)$.
Concretely, for $\xi \in \hat{\g}$,  $u^\pm \in {\mathcal U}^\pm$, $g\in {\mathcal T}$, and $f : \cU^+ {{\mathcal T}} \cU^-\to \bC$ smooth,
\begin{equation}\label{xif}(\xi.f) (u^+ g u^-)=\frac{d}{dt}\Big|_{t=0} f(\exp (t\xi) u^+gu^-).\end{equation}

Note that $\n^+ \oplus \h \oplus \n^-={\mathcal L}_{\rm pol}(\g)$, which is a dense subspace 
of ${\mathcal L}(\g)$ relative to the Fr\'echet topology.  
Moreover, the Lie algebra of $\cU^{\pm}$ is the closure of $\n^{\pm}$ in
${\mathcal L}(\g)$ relative to the aforementioned topology.
Recall that $\n^+$ is generated as a Lie algebra by $e_i$, $0\le i \le r$. Pick some complex numbers
   $ \chi^+_0, \ldots, \chi^+_r$ and consider the Lie algebra homomorphism $\chi^{+}:\n^+ \to \bC$ determined by
   $\chi^+(e_i)=\chi_i^+$, for all $0\le i \le r.$ Since  $\cU^{+}$ is simply connected  and
   its Lie algebra contains $\n^+$ as a dense subspace, there is a unique  lift $\chi^+: \cU^{+} \to \bC^*$    (which is a group homomorphism). In the same way, by picking  $ \chi^-_0, \ldots, \chi^-_r\in \bC$, one attaches
   to them a Lie algebra homomorphism $\chi^{-}:\n^- \to \bC$ such that 
   $\chi^-(f_i)=\chi_i^-$, $0\le i \le r$, and then a Lie group homomorphism $\chi^-: \cU^{-} \to \bC^*$. 
  A smooth  function $\phi: \cU^+ {{\mathcal T}} \cU^- \to \ \bC$ is called a {\it Whittaker function} if:
  \begin{align}
  \label{phiu}{}&\phi(u_+ g u_-) = \chi^+(u_+) \phi(g) \chi^-(u_-), \ {\rm for \  all} \ u_{\pm}\in \cU^{\pm} \ {\rm and} \ g\in {\mathcal T},\\
  {}& c.\phi=-h^\vee \phi,
  \end{align}
  where $h^\vee$ is the dual Coxeter number of $\g$. Such a function $\phi$ is 
clearly determined by its restriction to ${\mathcal T}$. In turn, for any $g\in T$ and any 
$t_0\in \bR$, 
$$\frac{d}{dt}\Big|_{t=t_0}\phi(\exp(t c) g) =-h^\vee  \phi(\exp(t_0c)g).$$
The initial value problem formed by this equation along with the
condition $\phi(\exp(tc)g)|_{t=0}=\phi(g)$ has a unique solution.
Thus, $\phi$ 
 is uniquely determined by its    
   values on
  $T$, hence the space of Whittaker functions can be naturally identified with $C^\infty(T, \bC)$. 
  
  Equation (\ref{phiu}) implies readily that for any Whittaker function $\phi$ and any $0\le i, j\le r$ one has:
  \begin{align}
 {}& e_i.\phi = \chi^+_i \phi \ {\rm on } \ {\mathcal T}\label{e}\\
 {}& f_i.\phi  
=  \chi_i^- e^{\alpha_i} \phi \ {\rm on } \ {\mathcal T}\\
{}& f_i.e_i.\phi = \chi^-_i \chi^+_i e^{\alpha_i}\phi \ {\rm on } \ {\mathcal T}\\
{}& f_i.f_j.\phi = \chi^-_i \chi^-_j e^{\alpha_i}e^{\alpha_j}\phi \ {\rm on } \ {\mathcal T}\\
{}& {\rm if} \  \alpha \ {\rm  is \  an \  affine \  positive \ nonsimple \   root \  and \ } \xi\in \hat{\g}_\alpha \ {\rm then \ }
  \xi.\phi =0 \ {\rm on } \ {\mathcal T}\label{ff}.
  \end{align}

    There exists a certain completion $\tilde{U}(\hat{\g})$ of $U(\hat{\g})$ such that the  center of $\tilde{U}(\hat{\g})/(c+h^\vee)$ 
    is rich, see \cite{FF, F}.
    First of all, it contains a degree two Casimir element ${C}$, which can be expressed as:
   \begin{equation}\label{hatc}{C} =\sum_{i=1}^r \xi_i^2 +2h_\rho +2\sum_{\alpha \in \hat{R}^+}\left(\sum_k f^k_\alpha e^k_\alpha\right).\end{equation}
    Here $\xi_1,\ldots, \xi_r$ is an orthonormal basis of $\h$, and for each positive affine root $\alpha \in \hat{R}^+$,
    the vectors  $e_\alpha^k$ are a basis of $\hat{\g}_\alpha$ and $ f_\alpha^k$ a basis of $\hat{\g}_{-\alpha}$
  such that $\langle e_\alpha^k,f_\alpha^\ell\rangle=\delta_{k\ell}$. For $\alpha = \alpha_i$ both $\hat{\g}_{\alpha}$ and 
  $\hat{\g}_{-\alpha}$ are
  one dimensional and we take $e_{\alpha_i}^1=e_i$, $f_{\alpha_i}^1=f_i$, $0\le i \le r$. Finally, $\rho$ denotes the half-sum of all positive
   roots of $(\g, \h)$ relative to the basis $\alpha_1,\ldots, \alpha_r$ and $h_\rho$ the element of $\h$ that corresponds to it  via the Killing form. Note that even though the second sum in
  (\ref{hatc}) is infinite, $C$ belongs to the completion $\tilde{U}(\hat{\g})$. 
  
  To any $\xi \in \mft$ we attach the directional derivative  $\partial_\xi$ on $C^\infty(T,\bC)$ given by
  $$(\partial_\xi f)(g) =\frac{d}{dt}\Big|_{t=0}f(\exp(t\xi)g)$$
  for all $f \in C^\infty(T, \bC)$ and all $g\in T$. 
Equations (\ref{e})-(\ref{hatc}) above imply that for any Whittaker function $\phi$ we have
  $(C.\phi)|_T = D (\phi|_T)$, where $D$ is the differential operator on $C^\infty(T,\bC)$ given by
  $$D=\sum_{i=1}^r \partial_{\xi_i}^2 + 2\partial_{h_\rho} + 2\sum_{i=0}^r \chi^+_i \chi^-_i e^{\alpha_i}.$$
  Here  $\partial_{\xi_1},\ldots, \partial_{\xi_r}, \partial_{h_\rho}$ are derivatives and 
  $e^{\alpha_i}$ is the function $T\to \bC$ given by $\exp(h)\mapsto e^{\alpha_i(h)}$, for all $h\in \h$, $0\le i \le r$.
  Observe now that $h_\rho = \sum_{i=1}^r \rho(\xi_i) \xi_i$, hence the composition of $D$ with $e^{-\rho}$ is
 \begin{align*}{}& De^{-\rho}=\left(\sum_{i=1}^r  \partial_{\xi_i}^2 + 2\partial_{h_\rho}+ 2\sum_{i=0}^r \chi^+_i \chi^-_i e^{\alpha_i}\right)e^{-\rho}\\
 {}& = \sum_{i=1}^r e^{-\rho} (\rho(\xi_i)^2-2\rho(\xi_i)\partial_{\xi_i} +\partial_{\xi_i}^2) 
 +2\sum_{i=1}^r\rho(\xi_i) e^{-\rho}(-\rho(\xi_i)+\partial_{\xi_i}) + 2\sum_{i=0}^r \chi^+_i \chi^-_i e^{\alpha_i}e^{-\rho}\\
 {}& = e^{-\rho}\left(\sum_{i=1}^r \partial_{\xi_i}^2 - \sum_{i=1}^r \rho(\xi_i)^2 + 2\sum_{i=0}^r \chi^+_i \chi^-_i e^{\alpha_i}\right).
 \end{align*}
  Since $\sum_{i=1}^r \rho(\xi_i)^2=\langle \rho, \rho\rangle$ and $\alpha_0 =-\theta$ on $\h$, one obtains
  $$e^{\rho} D e^{-\rho} = \sum_{i=1}^r \partial_{\xi_i}^2  +2\chi^+_0 \chi^-_0 e^{-\theta} +2\sum_{i=1}^r \chi^+_i \chi^-_i e^{\alpha_i} -\langle \rho,\rho\rangle.$$
  Choose $\chi_i^+$ and $\chi_i^-$  such that $\chi_i^+\chi_i^-=-1$ for $i\neq 1$,  $\chi_0^+=-1$, and $\chi_0^- =K$. Then 
  $$M=\frac{1}{2}(e^{\rho} D e^{-\rho} +\langle \rho,\rho\rangle).$$

    The center of $\tilde{U}({\g'}_\af)/(c+h^\vee)$ contains  $Y_1:={C}$ along with $Y_2, \ldots, Y_r$, which are of the form 
    $Y_i=u_i + Y_i^+$, where $u_i\in {\rm Sym}(\h)$ are  fundamental generators
    of ${\rm Sym}(\h)^W$ and $Y_i^+$ is a sum of monomials in $U(\hat{\g})\n^+$ of degree at most equal to $\deg u_i$,
    see \cite{FF, F}.  
From equations (\ref{e})-(\ref{ff}) one deduces that for any Whittaker function $\phi$ one has $(Y_i.\phi)|_T =D_i(\phi|_T)$, $1\le i \le r$,
where $D_i$ is a differential operator on $C^\infty(T,\bC)$ which admits a presentation as a  polynomial in
$ Ke^{-\theta}, e^{\alpha_1}, \ldots, e^{\alpha_r},$ $ \partial_{\xi_1}, \ldots, \partial_{\xi_r}$
(one also uses that $(f_0.\phi)|_T= \chi_0^-e^{\alpha_0}\phi|_T = Ke^{-\theta}\phi|_T).$ Furthermore, the  symbol of $D_i$ is $u_i(\partial_{\xi_1}, \ldots, \partial_{\xi_r})$.
    The differential operators  $D=D_1, D_2,  \ldots, D_r$ commute with each other.
    But then also $$D'_1:=M= \frac{1}{2}(e^{\rho} D e^{-\rho} +\langle \rho,\rho\rangle), D_2':=e^{\rho} D_2 e^{-\rho},\ldots,
  D_r':=e^{\rho} D_r e^{-\rho}$$
  commute with each other. Each of them is a  polynomial in
$ Ke^{-\theta}, e^{\alpha_1}, \ldots, e^{\alpha_r},$ $ \partial_{\xi_1}, \ldots, \partial_{\xi_r}$,
since  $e^\rho \partial_{\xi_i} e^{-\rho} = - \rho(\xi_i) + \partial_{\xi_i}$.
Moreover, the symbol of $D_i'$ is $u_i(\partial_{\xi_1}, \ldots, \partial_{\xi_r})$ as well. 

Let us now consider the  $``ax+b"$ algebra $\mfb=\mfa\oplus \mfu$ corresponding to the Dynkin diagram of $\g$
extended with the highest root. 
Its complexification is $\mfb_\bC=\mfa_\bC \oplus \mfu_\bC$, where one can identify $\mfa_\bC=\h$.
Then  
$\mfu_\bC$ has a $\bC$-basis  $X_0,\ldots, X_r$ such that the Lie bracket $[ \ , \ ]$ on $\mfb_\bC$ is
identically zero on both $\h$ and $\mfu_\bC$ and for any $h\in \h$ one has 
\begin{align*}
{}&[h, X_i]=\alpha_i(h)X_i, \quad 1\le i \le r\\
{}&[h, X_0]=-\theta(h)X_0.
\end{align*}
Let  $\Omega_1,\ldots, \Omega_r$ be the elements of $U(\mfb_\bC)$ which are obtained from the differential operators $D'_1, \ldots, D'_r$ above by making the following replacements:
    \begin{align*}
    e^{\alpha_i/2}\mapsto \frac{\sqrt{-1}}{2\sqrt{2}} X_{i},\quad \sqrt{K} e^{-\theta/2}\mapsto \frac{\sqrt{-1}}{2\sqrt{2}} X_{0},
    \quad \partial_{H_i} \mapsto \frac{1}{2}H_i; \quad 1\le i \le r.
    \end{align*}
In this way $$M=\frac{1}{2}\sum_{i,j=1}^r \langle\alpha_i,\alpha_j\rangle \partial_{H_i} \partial_{H_j}
     -Ke^{-\theta}-\sum_{i=1}^r e^{\alpha_i}$$
     turns into $\frac{1}{8}\Omega$, see equation (\ref{laplace}). Moreover, if $\theta=m_1\alpha_1+\cdots + m_r\alpha_r$, then for any $1\le i,j \le r$ we have:
     \begin{align*}{}& \left[\frac{1}{2}H_i, \frac{\sqrt{-1}}{2\sqrt{2}}X_j\right]=
     \delta_{ij}\frac{1}{2}\cdot\frac{\sqrt{-1}}{2\sqrt{2}}X_j, \quad
     \left[\frac{1}{2}H_i, \frac{\sqrt{-1}}{2\sqrt{2}}X_{0}\right]=
    - m_i \cdot \frac{1}{2}\cdot\frac{\sqrt{-1}}{2\sqrt{2}}X_{0}\\ 
     {}&\left[ \partial_{H_i}, e^{\alpha_j/2}\right] =\delta_{ij}\frac{1}{2}e^{\alpha_j/2}, 
     \quad    \left[ \partial_{H_i}, e^{-\theta/2}\right]=-m_i\cdot \frac{1}{2} e^{-\theta/2}.  
     \end{align*}
Thus $\Omega_i$ is in $U(\mfb_\bC)_\eve$ and satisfies the three conditions in Theorem \ref{thm:main}.

\begin{proposition}\label{long}
{ The conclusion of Theorem \ref{thm:main} is} true in the case when { $\Pi$ is an arbitrary 
irreducible root system}  
and $\beta$ is the highest root.  
\end{proposition}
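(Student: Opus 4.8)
The plan is to take the elements $\Omega_1,\dots,\Omega_r\in U(\mfb_\bC)$ produced in the construction above and to promote the three individual conditions (already recorded there) to full commutativity. The reversed substitution is a Lie algebra homomorphism from $\mfb_\bC$ into the algebra of differential operators on $C^\infty(T,\bC)$: the bracket identities displayed just before the statement say precisely that $H_i\mapsto 2\partial_{H_i}$, $X_i\mapsto(\text{const})\,e^{\alpha_i/2}$ and $X_0\mapsto(\text{const})\,\sqrt{K}e^{-\theta/2}$ preserve $[\,\cdot\,,\,\cdot\,]$. It therefore extends to an algebra homomorphism $\Phi\colon U(\mfb_\bC)\to\mathcal A$ onto the operator algebra $\mathcal A$, with $\Phi(\Omega_i)$ a nonzero multiple of $D_i'$ for each $i$ (in particular $\Phi(\Omega)=8M$). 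The crux is that $\Phi$ is \emph{not} injective: since $\theta=\sum_{i=1}^r m_i\alpha_i$, the element $X_0X_1^{m_1}\cdots X_r^{m_r}$ has weight zero, so $\Phi$ sends it to a nonzero constant $\kappa$ and $\ker\Phi\neq 0$. Thus $[D_i',D_j']=0$ does not by itself yield $[\Omega_i,\Omega_j]=0$, and the whole argument is about controlling this kernel.

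First I would prove $[\Omega_i,\Omega]=0$. Since the associated graded of $U(\mfb_\bC)$ is commutative, the top part of $[\Omega_i,\Omega]$ equals $[u_i,u_1]=0$ in $\mathrm{Sym}(\mfa)$, so $\deg[\Omega_i,\Omega]\le\deg u_i+1\le 2\sum_{i=1}^r m_i+1<2+2\sum_{i=1}^r m_i$, using the bound $\deg u_i\le 2\sum_i m_i$ noted in Section \ref{doi}. The key lemma I need is that $\Phi$ is injective on those elements of $U(\mfb_\bC)_{\eve}$ whose degree is strictly below $2+2\sum_i m_i$: sorting the even PBW monomials by their weight in $\mfa^*$ and by the multidegree in the $H_j$, one checks that two distinct monomials can share both only if their degrees differ by a positive multiple of $2+2\sum_i m_i$ — the same zero-weight phenomenon underlying Proposition \ref{degree} — which is impossible in this range, while distinct weights map to independent exponentials. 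As $[\Omega_i,\Omega]\in U(\mfb_\bC)_{\eve}$ and $\Phi([\Omega_i,\Omega])=[D_i',8M]=0$, injectivity gives $[\Omega_i,\Omega]=0$.

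With this in hand the mutual commutativity is immediate. For any $i,j$ the Jacobi identity and $[\Omega_i,\Omega]=[\Omega_j,\Omega]=0$ give $[\Omega,[\Omega_i,\Omega_j]]=0$, and $[\Omega_i,\Omega_j]\in U(\mfb_\bC)_{\eve}$ because the sign automorphisms $X_k\mapsto -X_k$ (identity elsewhere) fix both factors. Proposition \ref{degree}, in its evident $\bC$-linear version, then forces $[\Omega_i,\Omega_j]=c\,(X_0X_1^{m_1}\cdots X_r^{m_r})^2$ for a scalar $c$. Applying $\Phi$ turns the right-hand side into $c\kappa^2$, while the left-hand side is $[D_i',D_j']=0$ (up to the harmless scalars); hence $c=0$ and $[\Omega_i,\Omega_j]=0$. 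The case $j=1$, where $\Omega_1=\Omega$, is included.

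The main obstacle is exactly the non-injectivity of $\Phi$ forced by the zero-weight element $X_0X_1^{m_1}\cdots X_r^{m_r}$, and the proof converts this liability into its two decisive tools: in low degree $\Phi$ is injective (this settles $[\Omega_i,\Omega]$), while in any degree Proposition \ref{degree} confines a would-be commutator to a multiple of $(X_0X_1^{m_1}\cdots X_r^{m_r})^2$, on which $\Phi$ is visibly nonzero (this settles a general pair). Everything else — that the reversed substitution is genuinely a Lie homomorphism, that each $\Omega_i$ is even with symbol $u_i$ and degree $\deg u_i$, and the value $\Phi(\Omega)=8M$ via (\ref{laplace}) — consists of the routine checks already made in the construction.
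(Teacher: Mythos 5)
Your commutativity arguments in $U(\mfb_\bC)$ are essentially sound, and in one respect they are more careful than the paper itself: the paper simply asserts that the elements $\Omega_i$ obtained by the substitution inherit $[\Omega_i,\Omega]=0$ from $[D_i',M]=0$, whereas you correctly note that for a fixed scalar value of $K$ the homomorphism $\Phi$ has a kernel (generated by the zero-weight phenomenon around $X_0X_1^{m_1}\cdots X_r^{m_r}$), and you close this with a genuine lemma: injectivity of $\Phi$ on even elements of degree $<2+2\sum_i m_i$. Your weight-and-degree count there is correct (two distinct even PBW monomials with the same image must have $X$-degrees differing by a positive multiple of $2+2\sum_i m_i$). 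Under the alternative reading in which $K$ is a formal parameter, $\Phi$ is injective on the whole even part (the power of $K$ recovers the $X_0$-exponent), which is presumably why the paper felt no need for such a lemma. One caveat in your second step: Proposition \ref{degree} (Goodman--Wallach's Lemma 3.6) should be read as producing a multiple of $(X_0X_1^{m_1}\cdots X_r^{m_r})^2$ by an element of $U(\mfb)$, not a priori a scalar --- this is why its conclusion says degree \emph{at least} $2+2\sum_i m_i$ rather than exactly. With a left multiple $u\,(X_0X_1^{m_1}\cdots X_r^{m_r})^2$, your evaluation $\Phi(u)\kappa^2=0$ does not immediately force $u=0$; but the repair is within your own toolkit (cancellation in the domain $U(\mfb_\bC)$ shows $u$ is even of degree $<2+2\sum_i m_i$, so your injectivity lemma applies), or one can use the paper's degree comparison $\deg u_i+\deg u_j-1<2(1+\sum_i m_i)$ instead.

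The genuine gap is that you prove the theorem over $\bC$ and stop. Theorem \ref{thm:main} asserts the existence of $\Omega_i\in U(\mfb)_\eve$ for the \emph{real} Lie algebra $\mfb$, while the construction (yours and the paper's alike) yields $\Omega_2,\ldots,\Omega_r$ only in $U(\mfb_\bC)$. The paper's own proof of Proposition \ref{long} is mostly devoted to exactly this point: split $\Omega_i=\Omega_i'+\sqrt{-1}\,\Omega_i''$ by separating real and imaginary parts of the PBW coefficients, check that $\Omega_i'\in U(\mfb)_\eve$ has symbol $u_i$, degree $\deg u_i$, and commutes with $\Omega$ (take real parts of $[\Omega_i,\Omega]=0$, using that $\Omega$ has real coefficients), and then prove $[\Omega_i',\Omega_j']=0$ by a separate argument. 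Note that your conclusion $[\Omega_i,\Omega_j]=0$ only yields $[\Omega_i',\Omega_j']=[\Omega_i'',\Omega_j'']$, not vanishing, so the degree (or $\Phi$-evaluation) argument is still genuinely needed after realification --- this is precisely why the paper runs it for the pairs $(\Omega_i',\Omega_j')$ rather than deducing everything from complex commutativity. Since $[\Omega_i',\Omega_j']$ is even and commutes with $\Omega$, all your tools apply to it verbatim and the omission is repairable; but as written the proposal does not establish the stated proposition.
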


   \begin{proof}
   The operators $\Omega_1, \ldots, \Omega_r$ satisfy the conditions in Theorem \ref{thm:main}.
   However, we only know  that $\Omega_2, \ldots, \Omega_r$  are in $U(\mfb_\bC)$, although not necessarily in $U(\mfb)$. 
   For $i\in \{2, \ldots, r\}$ one considers the expansion of $\Omega_i$ relative to the PBW basis $X^IH^J$.
   By writing each coefficient as $a+\sqrt{-1}b$, with $a, b\in \bR$, one  obtains     $\Omega_i = \Omega_i' + \sqrt{-1}\Omega_i''$, where both $ \Omega_i'$ and $\Omega_i''$ are in 
    $U(\mfb)$.  But then $[\Omega_i'+\sqrt{-1}\Omega_i'', \Omega]=[\Omega_i', \Omega]+\sqrt{-1}[\Omega_i'', \Omega]$, which is 
    equal to 0. This implies that $[\Omega_i', \Omega]=0.$ Thus $\Omega'_2, \ldots, \Omega'_r$  are in $U(\mfb)$ and satisfy
    the three conditions in Theorem \ref{thm:main}. It remains to justify the last statement in the theorem,
    that is, that $[\Omega_i', \Omega_j']=0$, for all $2\le i, j\le r$.  This can be proved as follows. 
    First note that, since both $\Omega_i'$ and  $\Omega_j'$ are in
    $U(\mfb)_\eve$, their bracket $[\Omega_i', \Omega_j']$ is in $U(\mfb)_\eve$ as well.
    Also, since both $\Omega_i'$ and  $\Omega_j'$ commute with $\Omega$, 
   their bracket $[\Omega_i', \Omega_j']$ commutes with $\Omega$ as well. 
   Since $\mu([\Omega_i', \Omega_j'])=[\mu(\Omega_i'), \mu(\Omega_j')]=0$, one can use Proposition \ref{degree}  
   to deduce that $[\Omega_i', \Omega_j']$ is a multiple of $(X_0X_1^{m_1} \cdots X_r^{m_r})^2$.
On the other hand the degree of   $[\Omega_i', \Omega_j']$ is at most equal to $\deg \Omega_i'+\deg \Omega_j' -1$, which is
the same as $\deg u_i +\deg u_j -1$. { The table in Section \ref{doi} shows that 
 $\deg u_i +\deg u_j -1$  is strictly smaller that $2(1+m_1 + \cdots + m_r)$, for all $1\le i<j\le r$
 (recall that $m_1,\ldots, m_r$ are the coefficients of the highest root expansion).}
We conclude that  $[\Omega_i', \Omega_j']=0$, as required.
   \end{proof}

\end{document}